\numberwithin{equation}{section}
\newtheorem{thm}{Theorem}[section]
\newtheorem{lma}[thm]{Lemma}
\newtheorem{cor}[thm]{Corollary}
\newtheorem{prop}[thm]{Proposition}
\renewcommand{\epsilon}{\varepsilon}
\newcommand{\eps}{\varepsilon}
\renewcommand{\geq}{\geqslant}
\renewcommand{\leq}{\leqslant}
\newcommand{\uld}{\overline{\dim}_{\textup{loc}}}
\newcommand{\lld}{\underline{\dim}_{\textup{loc}}}
\newcommand{\ad}{\dim_{\mathrm{A}} }
\newcommand{\ld}{\dim_{\mathrm{L}}  }
\newcommand{\hd}{\dim_{\mathrm{H}}  }
\newcommand{\bd}{\dim_{\mathrm{B}}  }
\newcommand{\urd}{\overline{\dim}_{\mathrm{reg}} \, }
\newcommand{\lrd}{\underline{\dim}_{\mathrm{reg}} \, }
\newcommand{\hm}{d_\mathbb{H}}
\newcommand{\ps}{\mu_{ \mathrm{PS}}}
\newcommand{\ls}{L(\Gamma)}
\newcommand{\isom}{\mathrm{Con}(d)}
\newcommand{\kmin}{k_{\min}}
\newcommand{\kmax}{k_{\max}}
\title{ \vspace{-20mm}Regularity of Kleinian limit sets   and  \\ Patterson-Sullivan measures}
\author{Jonathan M. Fraser}
\begin{document}


\maketitle

\begin{abstract}
We consider several (related) notions of geometric regularity in the context of limit sets of geometrically finite Kleinian groups and associated  Patterson-Sullivan measures.  We begin by computing the upper and lower regularity dimensions of the Patterson-Sullivan measure, which involves controlling the relative measure of concentric balls.  We then compute the Assouad and lower dimensions of the limit set, which involves controlling local doubling properties.  Unlike the Hausdorff, packing, and box-counting dimensions, we show that the Assouad and lower dimensions are not necessarily given by the Poincar\'e exponent.
\\ \\ 
\emph{Mathematics Subject Classification} 2010: primary: 30F40, 28A80; secondary: 37F30, 37C45.
\\
\emph{Key words and phrases}: Kleinian group, Patterson-Sullivan measure, Assouad dimension, regularity dimension.
\end{abstract}

\section{Introduction}

\subsection{Limit sets of Kleinian groups and the Patterson-Sullivan measure}

 We consider fractal sets and measures arising from  discrete groups of isometries acting on hyperbolic space.  For integer $d \geq 1$, we model $(d+1)$-dimensional hyperbolic space using the Poincar\'e ball
\[
\mathbb{D}^{d+1} = \left\{ z \in \mathbb{R}^{d+1} \ : \ |z| < 1 \right\}
\]
equipped with the hyperbolic metric $\hm$ defined by
\[
d s =  \frac{2|dz|}{1-|z|^2}.
\]
The \emph{boundary at infinity} of the space $(\mathbb{D}^{d+1}, \hm)$ is $\mathbb{S}^{d} = \left\{ z \in \mathbb{R}^{d+1} \ : \ |z| = 1 \right\}$ and the group of  isometries is given by the stabliser of $\mathbb{D}^{d+1}$ in the M\"obius group acting on $\overline{\mathbb{R}^{d+1}}$.  We denote the group of orientation preserving isometries of $(\mathbb{D}^{d+1}, \hm)$ by $\isom$, and note that it is isomorphic to the (orientation preserving) M\"obius group acting on $\overline{\mathbb{R}^{d}}$.  We will sometimes appeal to the upper half-space model of hyperbolic space, where $\mathbb{D}^{d+1}$ is replaced by $\mathbb{H}^{d+1} = \mathbb{R}^d \times (0,\infty)$ equipped with the analogous metric, but this is purely for aesthetic reasons as these two models of hyperbolic space are of course isometric and, moreover, there is a M\"obius transformation between the corresponding boundaries which (we will see) preserves all of our notions of dimension.  We refer the reader to \cite{anderson, beardon, maskit, kapovich} for a more detailed study of hyperbolic geometry, including the isometry group, and the correspondence between, and equivalence of, the two models we use.

A \emph{Kleinian group} is a discrete  subgroup of $\isom$ and such groups act properly discontinuously on $\mathbb{D}^{d+1}$ but may fail to act discontinuously on the boundary.  The \emph{limit set} of a Kleinian group $\Gamma$ is the set of points where the action fails to be discontinuous and it carries a lot of geometric information concerning the group.  More precisely, writing $ \mathbf{0} = (0, \dots, 0) \in \mathbb{D}^{d+1}$, the limit set is defined by
\[
L(\Gamma) = \overline{\Gamma( \mathbf{0})} \setminus \Gamma( \mathbf{0}).
\]
This is a compact subset of $\mathbb{S}^{d}$ and often has a beautiful and subtle fractal structure.  Note that for definiteness we metrise $\mathbb{S}^{d}$ with the Euclidean metric $\| \cdot \|$ inherited from $\mathbb{R}^{d+1}$, although the standard Riemannian metric on $\mathbb{S}^d$ is bi-Lipschitz equivalent to this metric and so from a dimension point of view these two natural metrics on the limit set are equivalent.

If the limit set is empty or consists only of one or two points, then the Kleinian group is called \emph{elementary} and otherwise it is \emph{non-elementary}, in which case  the limit set is necessarily uncountable.  A Kleinian group is called \emph{geometrically finite} if it has a fundamental domain with finitely many sides.  The Poincar\'e exponent of a Kleinian group $\Gamma$ is defined by
\[
\delta(\Gamma) = \inf \left\{ s >0 \ : \ \sum_{g \in \Gamma} \exp(-s \, \hm(\mathbf{0},g( \mathbf{0})))  < \infty \right\}
\]
and plays a central role in the geometry and dimension theory of $\Gamma$.   In particular, the limit set of a non-elementary geometrically finite Kleinian group has Hausdorff dimension equal to $\delta(\Gamma)$. This  important result goes back to the influential papers of Patterson (for Fuchsian groups with some assumptions on parabolic elements) \cite[Theorems 4.1 and 5.1]{patterson} and Sullivan (for the general higher dimensional case) \cite[Theorem 1]{sullivan}.  Almost 20 years later it was  shown that in this setting the packing and box-counting dimensions of the limit set are also given by $\delta(\Gamma)$.   This result is due independently to Bishop and Jones \cite[Corollary 1.5]{bishopjones} and Stratmann and Urba\'nski \cite[Theorem 3]{stratmannurbanski}.  For a review of the Hausdorff, box-counting, and packing dimensions, see \cite[Chapters 2 and 3]{falconer}.  When discussing geometrically finite groups, we will only mention the Hausdorff dimension, which we denote by $\hd$, since the Hausdorff, packing, and box-counting dimensions coincide in this case.

Limit sets of non-elementary geometrically finite Kleinian groups are also known to carry an atomless  conformal ergodic Borel probability measure $\ps$ of Hausdorff dimension $\delta(\Gamma)$, known as the \emph{Patterson-Sullivan measure}.  Again, we will only discuss the (lower) \emph{Hausdorff} dimension of the Patterson-Sullivan measure, but this is known to equal the upper packing dimension (indeed, the Patterson-Sullivan measure is exact dimensional, see \cite{stratmannvelani}).  See \cite[Chapter 10]{techniques} for a review of the dimensions of measures.

The Patterson-Sullivan measure has played a central role in the geometry of Kleinian groups and, along with the limit set itself, is one of the key objects we study.   Stratmann and Velani's \emph{global measure formula} gives a formula for the measure of any ball up to uniform constants and will be particularly relevant to our work, see \cite[Theorem 2]{stratmannvelani}.  Before stating this formula we need to introduce some more notation, particularly concerned with parabolic elements, that is, elements in $\isom$ with precisely one fixed point in $\mathbb{S}^{d}$.

Fix a non-elementary  geometrically finite Kleinian group and suppose $\Gamma$ is not \emph{parabolic free}, that is, it contains at least one parabolic element.   Let $P \subseteq \ls$ denote the countable set of all parabolic fixed points, that is, points fixed by parabolic elements of $\Gamma$.  We may fix a standard set of pairwise disjoint  horoballs $\{H_p\}_{p \in P}$, where each $H_p$ is a horoball with base point $p$, that is, a closed Euclidean ball whose interior lies inside $\mathbb{D}^{d+1}$  and is tangent to $\mathbb{S}^{d}$ at $p$.  Moreover, the horoballs can be chosen such that $g(H_p) = H_{g(p)}$ for all $g \in \Gamma$ and $p \in P$.  Thus, although the choice of standard  horoballs is not unique, any given choice reflects the geometry of the limit set in a representative way.  The stabiliser of a parabolic fixed point $p$ cannot contain hyperbolic or loxodromic elements since if a subgroup of $\isom$ contains a parabolic and a hyperbolic/loxodromic element which fix the same point then the group is not discrete. Therefore the parabolic elements in the stabiliser of $p$ in $\Gamma$ generate a free Abelian group of finite index (as a subgroup of the stabiliser).  We define $k(p)$ to be the maximal rank of a free Abelian subgroup of the stabiliser of $p$ in $\Gamma$, which is necessarily generated by $k(p)$ parabolic elements all fixing $p$.  For an account of standard horoballs and ranks of parabolic elements, we refer the reader to the opening discussion in \cite{stratmannurbanski}.  We note the important fact that $\delta(\Gamma)> k(p)/2$ for all $p \in P$. 

 Given $z \in L(\Gamma)$ and $t>0$, let $z_t \in \mathbb{D}^{d+1}$ be the unique point on the geodesic ray joining $ \mathbf{0}$ to $z$ which is at hyperbolic distance $t$ from $ \mathbf{0}$.  Write  $S(z,t) \subset \mathbb{S}^{d}$ to denote the \emph{shadow at infinity} of the $d$-dimensional (hyperbolic) hyperplane passing through $z_t$ normal to the geodesic ray joining $ \mathbf{0}$ to $z$.  Basic hyperbolic geometry shows that  $S(z,t)$ is a Euclidean ball centred at $z$ with radius uniformly comparable to $e^{-t}$.  The global measure formula states that there is a uniform constant $C>1$ such that for all $z \in \ls$ and all $t>0$ we have
\begin{equation} \label{global}
\frac{1}{C} \ \leq  \ \frac{\ps(S(z,t))}{\exp(-t \delta(\Gamma)- \rho(z,t) (\delta(\Gamma) - k(z,t)))} \  \leq \  C
\end{equation}
where $k(z,t) = k(p)$ if $z_t \in H_p$ for some $p$ and $0$ otherwise and
\[
\rho(z,t) = \inf \{ \hm(z_t, y) \ : \ y \notin H_p \}
\]
if $z_t \in H_p$ for some $p$ and $0$ otherwise.  Note that if we choose a different set of standard horoballs, then the constant $C$ can change and so for definiteness we fix a set of standard horoballs, and therefore a constant $C$, for the rest of the paper.  The global measure formula still holds if $\Gamma$ is parabolic free and in that case it simplifies to
\begin{equation} \label{global2}
\frac{1}{C} \ \leq  \ \frac{\ps(S(z,t))}{\exp(-t \delta(\Gamma))} \  \leq \  C.
\end{equation}

\subsection{Regularity dimensions of measures and Assouad dimensions of sets}

In this section we work with a general complete metric measure space $(X,d, \mu)$ but our results will mostly concern the space $(\ls, \| \cdot \|, \ps)$.  

The upper and lower regularity dimensions of $\mu$ describe the optimal global control on the relative measure of concentric balls.  These dimensions were introduced formally in \cite{anti1,anti2} motivated by previous work on the existence of doubling measures, see for example \cite{luksak, konyagin}.   For some basic properties and the explicit computation of the (upper) regularity dimension in some particular contexts, see \cite{fraserhowroyd}.  The upper and lower regularity dimensions provide a generally applicable refinement of the notion of Ahlfors-David regularity.  Recall that a (non-atomic) measure is $s$-Ahlfors-David regular if the ratio $\mu(B(x,R))/R^s$ is uniformly bounded away from 0 and $+\infty$ for all $0<R\leq |\text{supp} (\mu)| $.  We write $\text{supp}(\mu) \subseteq X$ for the support of $\mu$,   $|E|$ for the diameter of non-empty (possibly unbounded) $E \subseteq X$, and $B(x,R)$ for the open ball of radius $R>0$ and centre $x \in X$.  The \emph{upper regularity dimension} of $\mu$ is defined by 
\begin{multline*} 
\urd \mu = \inf \Bigg\{ s \geq 0 \, \,  : \,  \text{ there exists }C  > 0\text{  such that, for all  $0< r< R< |\text{supp} (\mu)|$} \\   \text{  and all $x \in \text{supp} (\mu)$, we have }  \ \  \frac{\mu(B(x,R))}{\mu(B(x,r))} \leq C\left(\frac{R}{r}\right)^{s} \Bigg\}
\end{multline*}
and, provided $|\text{supp} (\mu)| >0$, the \emph{lower regularity dimension} of $\mu$ is defined by 
\begin{multline*} 
\lrd \mu = \sup \Bigg\{ s \geq 0 \, \,  : \,  \text{ there exists }C  > 0\text{  such that, for all  $0< r< R < |\text{supp} (\mu)| $} \\ \text{  and all $x \in \text{supp} (\mu)$, we have }  \ \  \frac{\mu(B(x,R))}{\mu(B(x,r))} \geq C\left(\frac{R}{r}\right)^{s} \Bigg\}
\end{multline*}
and otherwise it is 0.  We adopt the convention that $\inf \emptyset = + \infty$.   A measure $\mu$ is doubling if and only if $\urd \mu  <\infty$, see \cite{many, fraserhowroyd}. Also note that if a set carries an $s$-Ahlfors-David regular measure, then the upper and lower regularity dimensions coincide and equal $s$.   The regularity dimensions are heavily related to the Assouad and lower dimensions, which are purely metric notions describing the extremal scaling behaviour of a set in a metric space.   These dimensions have fundamental applications in embedding theory and quasi-conformal geometry, for example, and have recently been enjoying a period of intense interest in the fractal geometry literature.  We recall the definitions of the Assouad and lower  dimensions here, but refer the reader to \cite{Robinson, Fraser, Luukkainen, mackaytyson} for more details.  For non-empty $E \subseteq X$ and $r>0$, let $N_r (E)$ be the smallest number of open sets with diameter less than or equal to $r$ required to cover $E$.  The \emph{Assouad dimension} of a non-empty set $F\subseteq X$ is defined by
\begin{multline*} 
\ad F =  \inf \Bigg\{ s \geq 0 \, \,  : \,  \text{ there exists  }C  > 0\text{  such that, for all  $0< r< R<|F| $} \\ \text{and all $x \in F$, we have }  \ \  N_r\big( B(x,R) \cap F \big) \leq C\left(\frac{R}{r}\right)^{s} \Bigg\}
\end{multline*}
and, provided $|F| >0$, the \emph{lower dimension} of $F$ is defined by
\begin{multline*} 
\ld F = \sup \Bigg\{ s \geq 0 \, \,  : \,  \text{ there exists }C  > 0\text{  such that, for all  $0< r< R <|F| $} \\ \text{and all $x \in F$, we have }  \ \  N_r\big( B(x,R) \cap F \big) \geq C\left(\frac{R}{r}\right)^{s} \Bigg\}
\end{multline*}
and otherwise it is 0.  It is well-known that for closed $F$ we have
\[
\ld F \leq \hd F \leq \ad F.
\]
The  regularity dimensions can be thought of as the Assouad and lower dimensions of a measure.  Indeed, for any Borel probability measure $\nu$ fully supported on a closed set $F \subseteq X$, it is easy to see that $\lrd \nu \leq \ld F \leq \ad F \leq \urd \nu$ but a deeper fact is that if $F$ is doubling, then
\[
\ad  F = \inf \left\{ \urd \nu \,  \colon \, \nu \text{ is a Borel probability measure fully supported on } F\right\}
\]
and
\[
\ld F = \sup \left\{ \lrd \nu \,  \colon \, \nu \text{ is a Borel probability measure fully supported on } F \right\},
\]
see \cite{anti2} and the references therein.  Having finite Assouad dimension is equivalent to being doubling,  and having strictly positive lower dimension is equivalent to being uniformly perfect.  J\"arvi and Vuorinen  \cite[Theorem 5.5]{uniperfect} proved that limit sets of finitely generated Kleinian groups are uniformly perfect and so it is natural to pursue a  quantitative version of this result where one computes the lower dimension explicitly.   Indeed, uniform perfectness has been investigated extensively in the context of Kleinian limit sets, see \cite{sugawa} and the references therein.

We close this section with the observation that the regularity dimensions (and therefore the Assouad and lower dimensions) are preserved under M\"obius transformations.  Although simple, this observation is important since properties of the limit set and Patterson-Sullivan measure should be preserved under the action of $\isom$ on $\mathbb{S}^{d}$ and also should be independent of the chosen model of hyperbolic space.  The observation that the Assouad dimension is preserved under general M\"obius transformations can be found in \cite[Theorem A.10]{Luukkainen} and a similar argument establishes the analogous result for lower dimension. For the action of $\isom$ on $\mathbb{S}^d$ the situation is already very simple since each element $g \in \isom$ is bi-Lipschitz  on  $\mathbb{S}^d$ (the bi-Lipschitz constants are not uniform over $\isom$, but this does not matter) and  so the dimensions of sets and measures supported on $\mathbb{S}^{d}$ are clearly preserved by $\isom$.  For general M\"obius transformations $g:\overline{\mathbb{R}^{d}} \to \overline{\mathbb{R}^{d}}$, if $\mu$ is a Borel probability measure supported on $\mathbb{R}^{d}$, then, provided $\mu (\{g^{-1}(\infty)\}) = 0$, the pushforward measure $g(\mu) = \mu \circ g^{-1}$ is a Borel probability measure supported on $\mathbb{R}^{d}$ and one can show that the regularity dimensions of $\mu$ and $g(\mu)$ coincide.  We do not rely on this fact, but point it out to reassure readers that our results are independent of how we model  hyperbolic space.

\section{Results}

Throughout this section $\Gamma < \isom$ will be a non-elementary geometrically finite Kleinian group acting on $\mathbb{D}^{d+1}$.  Also, $\ls \subseteq  \mathbb{S}^{d}$ will denote the  limit set of $\Gamma$,  $\ps$ the associated Patterson-Sullivan measure, and $\delta(\Gamma)$ the Poincar\'e exponent. In order to remove simple cases, we start with the following immediate consequence of the global measure formula (\ref{global}).

\begin{thm}
If $\Gamma$ is a non-elementary geometrically finite Kleinian group which is parabolic free, then $\ps$ is $\delta(\Gamma)$-Ahlfors-David regular and therefore
\[
\ad \ls = \ld \ls = \urd \ps = \lrd \ps = \delta(\Gamma).
\]
\end{thm}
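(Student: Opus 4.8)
The plan is to read off the Ahlfors-David regularity of $\ps$ directly from the parabolic-free global measure formula (\ref{global2}) and then deduce the four dimension identities from the general inequalities recorded in the previous section. The key geometric input is that each shadow $S(z,t)$ is a Euclidean ball centred at $z \in \ls$ whose radius lies in $[a_1 e^{-t}, a_2 e^{-t}]$ for uniform constants $0 < a_1 \leq a_2$. This lets me trap an arbitrary ball between two shadows. Given $z \in \ls$ and a radius $R < a_1$, I would set $t_1 = \log(a_2/R)$ and $t_2 = \log(a_1/R)$, both positive, so that
\[
S(z,t_1) \subseteq B(z,R) \subseteq S(z,t_2),
\]
since the outer shadow has radius at least $a_1 e^{-t_2} = R$ while the inner shadow has radius at most $a_2 e^{-t_1} = R$.

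Applying (\ref{global2}) to each of the two enclosing shadows then yields, for all $z \in \ls$ and all $R < a_1$,
\[
\tfrac{1}{C} a_2^{-\delta(\Gamma)} R^{\delta(\Gamma)} \; \leq \; \ps(S(z,t_1)) \; \leq \; \ps(B(z,R)) \; \leq \; \ps(S(z,t_2)) \; \leq \; C a_1^{-\delta(\Gamma)} R^{\delta(\Gamma)},
\]
so that $\ps(B(z,R)) / R^{\delta(\Gamma)}$ is bounded away from $0$ and $\infty$ uniformly; since the support of $\ps$ is exactly $\ls$, this is the required regularity bound at small scales. For the remaining bounded range $R \in [a_1, |\ls|]$ the estimate is immediate: the lower bound follows by monotonicity of $\ps$ from the small-scale bound at radius $a_1$ together with $R^{\delta(\Gamma)} \leq |\ls|^{\delta(\Gamma)}$, and the upper bound follows from $\ps(B(z,R)) \leq 1$ together with $R^{\delta(\Gamma)} \geq a_1^{\delta(\Gamma)}$. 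Merging the two ranges and adjusting constants shows that $\ps$ is $\delta(\Gamma)$-Ahlfors-David regular.

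The dimension identities are then automatic. As $\ps$ is a Borel probability measure fully supported on the closed set $\ls$, the chain $\lrd \ps \leq \ld \ls \leq \ad \ls \leq \urd \ps$ applies, and Ahlfors-David regularity pins down both endpoints, $\lrd \ps = \urd \ps = \delta(\Gamma)$, forcing the common value $\delta(\Gamma)$ throughout. I do not expect any serious obstacle here — this is precisely why the statement is flagged as an immediate consequence of the global measure formula. The only points needing care are the bookkeeping of the constants $a_1, a_2, C$ and the separate, routine treatment of radii comparable to the diameter, where the global measure formula is no longer the natural tool.
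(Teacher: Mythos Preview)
Your proposal is correct and is exactly the argument the paper has in mind: it declares the theorem an immediate consequence of the parabolic-free global measure formula (\ref{global2}), and your write-up simply fills in the routine details (trapping balls by shadows, handling large radii, and invoking the chain $\lrd \ps \leq \ld \ls \leq \ad \ls \leq \urd \ps$ together with the observation that Ahlfors--David regularity forces $\lrd \ps = \urd \ps = \delta(\Gamma)$).
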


In light of this result, we assume from now on that $\Gamma$ contains a parabolic element and write $1 \leq \kmin \leq \kmax \leq d$ to denote the minimal and maximal ranks of parabolic fixed points, respectively. 

Our first result gives precise formulae for the regularity dimensions of the Patterson-Sullivan measure associated with a geometrically finite Kleinian group.

\begin{thm} \label{reg}
Let $\Gamma< \isom$ be a non-elementary geometrically finite Kleinian group which is not parabolic free.  The upper and lower regularity dimensions of the Patterson-Sullivan measure are continuous and piecewise linear in the Poincar\'e exponent both with a single phase transition at $(\kmin+\kmax)/2$.  In particular, 
\[
\urd \ps \ = \ \max\left\{ \kmax, \, 2 \delta(\Gamma) - \kmin \right\}
\]
and
\[
\lrd \ps \ = \ \min\left\{ \kmin, \, 2 \delta(\Gamma) - \kmax \right\}.
\]
\end{thm}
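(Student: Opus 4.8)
The plan is to use the global measure formula (\ref{global}) to translate the problem into a statement about the extremal average slopes of a single explicit Lipschitz function along geodesic rays. Write $\delta = \delta(\Gamma)$ and, for $z \in \ls$ and $t>0$, set
\[
\phi_z(t) = \delta \, t + \rho(z,t)\big(\delta - k(z,t)\big),
\]
so that (\ref{global}) reads $\ps(S(z,t)) \asymp \exp(-\phi_z(t))$ with uniform constants. Since $S(z,t)$ is a ball centred at $z$ of radius comparable to $e^{-t}$, and since $t \mapsto \rho(z,t)$ is $1$-Lipschitz (it is the distance from $z_t$, moving at unit hyperbolic speed, to the complement of a horoball, and vanishes outside horoballs), the function $\phi_z$ is Lipschitz with constant depending only on $\delta$ and $\kmax$. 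Sandwiching an arbitrary ball $B(x,R)$ between two shadows $S(x,t')$ and $S(x,t'')$ with $|t'-t''| = O(1)$ and $e^{-t'} \asymp e^{-t''} \asymp R$ then gives $\ps(B(x,R)) \asymp \exp(-\phi_x(\log(1/R)))$ for all $x \in \ls$ and all $0<R<|\ls|$, uniformly. Feeding this into the definitions, for $0<r<R<|\ls|$ the ratio $\ps(B(x,R))/\ps(B(x,r))$ is comparable to $\exp\big(\phi_x(\tau)-\phi_x(t)\big)$ with $t=\log(1/R)<\tau=\log(1/r)$ and $\tau - t = \log(R/r)$. Thus $\urd \ps$ and $\lrd \ps$ are exactly the supremal and infimal average slopes $\big(\phi_z(\tau)-\phi_z(t)\big)/(\tau-t)$, taken over $z \in \ls$ and $0<t<\tau$, with an additive $O(1)$ that becomes irrelevant once $\tau-t$ is large.

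Next I would prove the two universal inequalities $\lrd\ps \ge \min\{\kmin, 2\delta-\kmax\}$ and $\urd\ps \le \max\{\kmax, 2\delta-\kmin\}$ by bounding $\phi_z(\tau)-\phi_z(t)$ directly. Writing this difference as $\delta(\tau-t) + \rho(z,\tau)(\delta-k_\tau) - \rho(z,t)(\delta-k_t)$ with $k_t=k(z,t)$, $k_\tau=k(z,\tau)$, there are two geometric cases. If $z_t$ and $z_\tau$ lie in a common horoball then, since horoballs are convex, the whole geodesic segment does too, so $k_t=k_\tau=:k$ and $|\rho(z,\tau)-\rho(z,t)| \le \tau-t$; this immediately traps the difference between $\min\{k,2\delta-k\}(\tau-t)$ and $\max\{k,2\delta-k\}(\tau-t)$. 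Otherwise the ray passes through the complement of all (disjoint) horoballs between times $t$ and $\tau$, so $\rho(z,t)+\rho(z,\tau) \le \tau-t$, and a one-line linear optimisation in $\rho(z,t),\rho(z,\tau)\ge 0$ over this constraint, using $k_t,k_\tau \in [\kmin,\kmax]$ together with the elementary fact that $\delta$ always lies between $\min\{\kmin,2\delta-\kmax\}$ and $\max\{\kmax,2\delta-\kmin\}$, yields the same two-sided bound. As these inequalities hold for all $t<\tau$, the conversion of the first paragraph gives both universal bounds. This is the routine half.

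The substance is the reverse inequalities, i.e. that these extremal slopes are attained, and here two distinct mechanisms are needed. For the slopes $2\delta - \kmin$ and $2\delta - \kmax$ I would take $z$ to be a parabolic fixed point $p$ of the appropriate rank: the ray to $p$ eventually enters $H_p$ and thereafter climbs straight to $p$ at unit speed, so $\rho(p,t)=t-t_0$ and $\phi_p$ is exactly affine with slope $2\delta - k(p)$ on $[t_0,\infty)$; choosing $p$ of rank $\kmin$ realises slope $2\delta-\kmin$ for $\urd\ps$, and rank $\kmax$ realises $2\delta-\kmax$ for $\lrd\ps$, over arbitrarily long intervals. For the remaining slopes $\kmax$ and $\kmin$ I would instead use the descent out of a deep horoball: since $\ls$ is perfect, there are points $z \in \ls \setminus \{p\}$ arbitrarily close to a rank-$k$ parabolic point $p$, and the rays to them penetrate $H_p$ to depth $D=D(z) \to \infty$. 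Taking $t$ at the deepest point (where $\rho=D$ and $k(z,t)=k(p)$) and $\tau$ at the exit (where $\rho=0$) gives $\phi_z(\tau)-\phi_z(t) = \delta(\tau-t) - D(\delta-k)$; the key hyperbolic-geometry input is that the length $\tau-t$ of this descent equals $D + O(1)$, whence the average slope is $(kD+O(1))/(D+O(1)) \to k$ as $D\to\infty$. Taking $k=\kmax$ gives $\urd\ps \ge \kmax$ and $k=\kmin$ gives $\lrd\ps \le \kmin$, completing both formulas; the continuity, piecewise-linearity, and the common phase transition at $(\kmin+\kmax)/2$ then read off immediately, as does positivity of $\lrd\ps$ from $\delta > \kmax/2$.

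The main obstacle is this last descent estimate. The universal bounds above are soft, relying only on the $1$-Lipschitz property of $\rho$ and convexity of horoballs, but the sharpness of the $\kmax$ and $\kmin$ slopes hinges on controlling the hyperbolic length of a geodesic's excursion out of a horoball in terms of its depth, and on verifying that rays with genuine endpoints in $\ls$ (not merely abstract geodesics) can be made to penetrate horoballs of the extreme ranks arbitrarily deeply. Establishing the clean asymptotic $\tau-t = D + O(1)$, uniformly over all such excursions, by an explicit geodesic computation in the upper half-space normalisation where $H_p$ is based at $\infty$, is where the real work lies.
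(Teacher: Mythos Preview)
Your proposal is correct and follows essentially the same approach as the paper: the universal bounds are exactly the paper's two-case analysis via the quick escape lemma (your Lipschitz/convexity observations are that lemma), and your two sharpness mechanisms---sitting at a parabolic fixed point for the slopes $2\delta-k$, and tracking a deep horoball excursion for the slopes $k$---match the paper's constructions. The only cosmetic difference is that you take $t$ at the deepest point and invoke the asymptotic $\tau-t=D+O(1)$ directly, whereas the paper chooses $t$ slightly differently so that $\rho(z,t)\ge T-t-1$ by construction; both encode the same hyperbolic computation in the upper half-space model with $p=\infty$.
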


We prove Theorem \ref{reg} in Section \ref{regproof}.  We now turn our attention to the related question of the Assouad and lower dimensions and state our main result.

\begin{thm} \label{ass}
Let $\Gamma< \isom$ be a non-elementary geometrically finite Kleinian group  which is not parabolic free.  The Assouad and lower dimensions of $\ls$ are functions of the Poincar\'e exponent of convergence and the maximal and minimal ranks of parabolic fixed points.  In particular,
\[
\ad \ls =  \max\{ \kmax, \,   \delta(\Gamma)\}
\]
and
\[
\ld \ls =  \min\{ \kmin, \, \delta(\Gamma) \}.
\]
\end{thm}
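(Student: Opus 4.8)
The plan is to deduce both dimension formulae from a single uniform two-sided covering estimate, and to supply the matching bounds by exhibiting extremal local behaviour. Concretely, I would aim to prove the \emph{key covering lemma}: there are constants $c, C>0$ such that for all $x \in \ls$ and all $0<r<R<|\ls|$,
\[
c\left(\frac{R}{r}\right)^{\min\{\kmin,\,\delta(\Gamma)\}} \ \leq \ N_r\big(B(x,R)\cap\ls\big) \ \leq \ C\left(\frac{R}{r}\right)^{\max\{\kmax,\,\delta(\Gamma)\}}.
\]
The upper inequality gives $\ad\ls \leq \max\{\kmax,\delta(\Gamma)\}$ and the lower inequality gives $\ld\ls \geq \min\{\kmin,\delta(\Gamma)\}$ directly from the definitions. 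The two reverse inequalities come from the general relations $\ld\ls \leq \hd\ls \leq \ad\ls$ together with the known equality $\hd\ls=\delta(\Gamma)$: these immediately yield $\ad\ls \geq \delta(\Gamma)$ and $\ld\ls \leq \delta(\Gamma)$. It then remains only to show $\ad\ls\geq\kmax$ and $\ld\ls\leq\kmin$.

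For $\ad\ls\geq\kmax$, I would fix a parabolic fixed point $p$ of maximal rank $\kmax$ and apply a M\"obius transformation sending $p$ to $\infty$; since the Assouad dimension is M\"obius invariant (as recorded in the introduction) I may work in the upper half-space model with $p$ at infinity. The maximal parabolic subgroup fixing $p$ then contains a rank-$\kmax$ group of translations spanning a lattice $\Lambda\cong\mathbb{Z}^{\kmax}$, and $\ls$ is $\Lambda$-invariant, so for any $w\in\ls\setminus\{\infty\}$ the coset $w+\Lambda$ is a genuine rank-$\kmax$ lattice contained in $\ls$. An elementary computation gives $\ad(w+\Lambda)=\kmax$, and monotonicity of the Assouad dimension forces $\ad\ls\geq\kmax$. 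For $\ld\ls\leq\kmin$ I cannot argue by passing to a subset, since the lower dimension is not monotone; instead I would show that the exponent in the covering lemma is attained from below at a rank-$\kmin$ parabolic point, producing sequences of scales with $N_r(B(p,R)\cap\ls)\leq C(R/r)^{\kmin}$ and $R/r\to\infty$, which already suffices to conclude $\ld\ls\leq\kmin$.

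The heart of the matter is the key covering lemma, which I would prove using the global measure formula (\ref{global}) to organise a horoball count. After moving a relevant parabolic point of rank $k$ to $\infty$ by inversion, a ball $B(x,R)$ meeting the cusp corresponds to the far part $\{|y|\gtrsim 1/R\}$ of the $\Lambda$-invariant set $\ls$, which decomposes into dyadic annuli $\{|y|\approx 2^j/R\}$; within each annulus the inversion is a near-similarity, so covering $B(x,R)\cap\ls$ at scale $r$ amounts to covering these annuli at scales $\rho_j\approx r\,2^{2j}/R^2$. Counting the $\approx(2^j/R)^{k}$ lattice cells in each annulus and covering each cell (a bounded piece of $\ls$, of box dimension $\delta(\Gamma)$) at the appropriate scale, the contribution of annulus $j$ is comparable to $R^{2\delta(\Gamma)-k}r^{-\delta(\Gamma)}2^{j(k-2\delta(\Gamma))}$ in the deep regime and to $(R/(r2^j))^{k}$ in the shallow regime, the transition occurring near the depth scale $r\approx R^2$. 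The inequality $\delta(\Gamma)>k/2$ guarantees $k-2\delta(\Gamma)<0$, so both geometric series are summable and the total is controlled by its leading term; a short case analysis, according to whether $r<R^2$ or $r\geq R^2$ and whether $\delta(\Gamma)<k$ or $\delta(\Gamma)\geq k$, then shows the covering number lies between the two claimed powers, with exponent $k$ or $\delta(\Gamma)$ according to the scale.

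The main obstacle is making this count genuinely uniform over \emph{all} centres $x$ and \emph{all} scale pairs $r<R$, rather than only for $x$ a parabolic point. A general ball may sit in the Ahlfors-regular thick part (where the exponent is simply $\delta(\Gamma)$), may straddle the boundary of a horoball shadow, or may meet several cusps of different ranks at different depths; moreover the bounded cell covered at each step is itself a piece of $\ls$ containing smaller shadows of other parabolic points, so the estimate is implicitly recursive. The global measure formula is the tool that resolves this, since the exponent $t\delta(\Gamma)+\rho(z,t)(\delta(\Gamma)-k(z,t))$ encodes the rank and depth of every cusp simultaneously and to uniform multiplicative constants; the task is to convert these uniform measure estimates into uniform shadow counts and to verify that, across every configuration, the extremal covering exponents are exactly $\max\{\kmax,\delta(\Gamma)\}$ and $\min\{\kmin,\delta(\Gamma)\}$. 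I expect this bookkeeping, and in particular handling the depth/scale regime transitions uniformly, to be the most delicate part of the proof.
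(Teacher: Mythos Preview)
Your easy directions match the paper: the bounds $\ad\ls\geq\delta(\Gamma)$ and $\ld\ls\leq\delta(\Gamma)$ via $\hd\ls=\delta(\Gamma)$, and the lattice argument for $\ad\ls\geq\kmax$ (send a rank-$\kmax$ point to $\infty$, find $\mathbb{Z}^{\kmax}\subset\ls$) are exactly what the paper does.

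For $\ld\ls\leq\kmin$ there is a hidden assumption you should make explicit. When you send a rank-$\kmin$ parabolic point $p$ to $\infty$ and speak of each lattice cell as ``a bounded piece of $\ls$'', you are using that $\ls\setminus\{\infty\}$ lies in a bounded neighbourhood of the $\kmin$-dimensional translation subspace. This is not automatic from the lattice alone: the lattice tells you $\ls$ is $\mathbb{Z}^{\kmin}$-periodic, but periodicity does not confine $\ls$ to a slab. What you need is precisely that $p$ is a \emph{bounded} parabolic fixed point, i.e.\ $(\ls\setminus\{p\})/\mathrm{Stab}(p)$ is compact, which is Bowditch's characterisation of geometric finiteness. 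The paper invokes this explicitly (their Lemma on $\ls\subset V_\lambda\cup\{\infty\}$) and it is the one place geometric finiteness is genuinely used on this side.

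For the covering lemma itself, two remarks. First, the paper does \emph{not} prove your key lemma with uniform constants at the exact exponents; it proves that for each $\varepsilon>0$ there is $C_\varepsilon$ with $N_r\leq C_\varepsilon(R/r)^{\max\{\kmax,\delta(\Gamma)\}+\varepsilon}$ and then lets $\varepsilon\to 0$. The $\varepsilon$ arises from a genuine trade-off in the decomposition (points with small horoball depth versus points with large depth), so your stated uniform bound may be stronger than what these methods deliver, though it is enough for the theorem in its $\varepsilon$-form. Second, your annulus heuristic describes well what happens near a \emph{single} cusp, and you correctly flag uniformity across cusps as the crux; but the paper's mechanism for this is rather different from the recursive picture you sketch. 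It works entirely in the ball model, decomposes a maximal $e^{-T}$-packing of $B(z,e^{-t})$ according to the horoball depth $\rho(x_i,T)$, and controls the deep pieces using a localised horoball-counting estimate of the form
\[
\sum_{\substack{p\in P\cap B(z,e^{-t}) \\ e^{-t}>|H_p|\geq e^{-T}}} |H_p|^{\delta(\Gamma)} \ \lesssim \ (T-t)\,\ps(B(z,e^{-t})),
\]
together with Stratmann--Velani's squeezed-horoball measure formula. This handles all parabolic ranks simultaneously without any recursion or inversion. Your inversion-and-annuli approach could plausibly be made to work, but it would require an induction on cusp depth that you have not set up, whereas the horoball-counting route sidesteps this entirely.
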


We prove Theorem \ref{ass} in Section \ref{assproof}.  We emphasise here that, even though the Hausdorff, packing and upper and lower box dimensions of $\ls$ always coincide with the Poincar\'e exponent, the Assouad and lower dimensions may not.

\subsection{Applications and observations}

In this section we discuss several consequences of our results, hopefully demonstrating their relevance in other contexts.

\textbf{The Patterson-Sullivan measure is `uniformly perfect':}  It follows from Theorem \ref{reg} that the lower regularity dimension of the Patterson-Sullivan measure for a non-elementary geometrically finite Kleinian group is always strictly positive.  This can be viewed as a measure theoretic analogue of the result in \cite[Theorem 5.5]{uniperfect} that the support of such measures are uniformly perfect, that is, have strictly positive lower dimension.

\textbf{Consequences and characterisations of full Assouad dimenion:}  Having full Assouad dimension (i.e. equal to that of the ambient space) is a strong property with numerous consequences.  In particular, not having  full Assouad dimension is equivalent to being porous and if the Assouad dimension is full, then so is the conformal Assouad dimension, that is, the Assouad dimension may not be lowered by quasi-symmetric transformations, see \cite{fraseryu, mackaytyson}.  Theorem \ref{ass}, combined with the deep result of Tukia \cite[Theorem D]{tukia} that $\delta(\Gamma)< d$ if and only if the limit set is not the entire boundary, provides the following precise characterisation of when the Assouad dimension of $\ls$ has full Assouad dimension.

\begin{cor}
If  $\Gamma < \isom$ is geometrically finite and  $\ls \neq \mathbb{S}^d$, then the following are equivalent:
\begin{enumerate}
\item the limit set has full Assouad dimension, that is $\ad \ls = d$,
\item there exists a cusp of maximal rank, that is $\kmax = d$,
\item for any quasi-symmetric transformation $\phi$, we have $\ad \phi(\ls) = d$,
\item the limit set is non-porous.
\end{enumerate}
\end{cor}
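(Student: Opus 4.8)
The plan is to treat statement (1), that $\ad \ls = d$, as the central hub and to establish its equivalence with each of (2), (3) and (4) separately, drawing only on Theorem \ref{ass}, Tukia's theorem \cite{tukia}, and the two cited facts about porosity and conformal Assouad dimension \cite{fraseryu, mackaytyson}. The first thing I would record is the consequence of Tukia's result: since we assume $\ls \neq \mathbb{S}^d$, we get the \emph{strict} inequality $\delta(\Gamma) < d$. I would also dispose of the degenerate cases at the outset: if $\Gamma$ is elementary then $\ls$ is finite and $\ad \ls = 0 < d$, while if $\Gamma$ is non-elementary but parabolic free then the first theorem of Section 2 gives $\ad \ls = \delta(\Gamma) < d$; in either case there are no cusps, so (1) and (2) both fail and the stated equivalences hold vacuously. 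Hence I may assume $\Gamma$ is non-elementary and not parabolic free, so that Theorem \ref{ass} applies and $\kmax$ is well defined with $1 \le \kmin \le \kmax \le d$.

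The equivalence (1) $\Leftrightarrow$ (2) is then a short computation. Theorem \ref{ass} gives $\ad \ls = \max\{\kmax, \, \delta(\Gamma)\}$, and since $\ls \subseteq \mathbb{S}^d$ we always have $\ad \ls \le d$. Because $\delta(\Gamma) < d$ strictly while $\kmax \le d$, the maximum $\max\{\kmax, \delta(\Gamma)\}$ can equal $d$ only through the term $\kmax$; thus $\ad \ls = d$ precisely when $\kmax = d$, which is exactly (2).

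For (1) $\Leftrightarrow$ (4) I would invoke the cited equivalence that a subset of $\mathbb{R}^d$ (equivalently of $\mathbb{S}^d$, passing through a M\"obius chart, which preserves all our dimensions) fails to have full Assouad dimension exactly when it is porous; negating this statement yields that $\ad \ls = d$ if and only if $\ls$ is non-porous. For (1) $\Leftrightarrow$ (3), the implication (3) $\Rightarrow$ (1) is immediate upon taking $\phi$ to be the identity map, which is quasi-symmetric, so that $\ad \ls = \ad \phi(\ls) = d$. Conversely, (1) $\Rightarrow$ (3) follows from the cited fact that full Assouad dimension forces full conformal Assouad dimension, i.e. the Assouad dimension cannot be lowered by quasi-symmetric maps: for any quasi-symmetric $\phi$ the image lies in a space of the same dimension, so $\ad \phi(\ls) \le d$ always, while the non-lowering property gives $\ad \phi(\ls) \ge \ad \ls = d$, forcing equality.

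I do not expect a genuine obstacle, since every step is either the arithmetic of a maximum or a direct appeal to a result already quoted. The only points demanding care are making sure the \emph{strict} inequality $\delta(\Gamma) < d$ supplied by Tukia is actually used, so that it is $\kmax$ and not $\delta(\Gamma)$ that decides whether the maximum reaches the ambient dimension, and cleanly separating off the elementary and parabolic-free cases where $\kmax$ is not available and the equivalences must be checked to hold trivially rather than via Theorem \ref{ass}.
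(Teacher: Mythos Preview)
Your proposal is correct and follows exactly the route the paper indicates: the paragraph preceding the corollary already records that (1)$\Leftrightarrow$(4) and (1)$\Leftrightarrow$(3) are the cited porosity and conformal Assouad dimension facts, and that (1)$\Leftrightarrow$(2) comes from Theorem~\ref{ass} together with Tukia's strict inequality $\delta(\Gamma)<d$. Your explicit treatment of the elementary and parabolic-free cases is a welcome addition that the paper leaves implicit.
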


In fact our arguments prove that the conformal Assouad dimension is always bounded below by $\kmax$ and is equal to $\kmax$ whenever $\delta(\Gamma) \leq \kmax$.

\textbf{Invariant measures with optimal dimensions:}  The interplay between dynamically invariant sets and measures is central to  the dimension theory of dynamical systems with a  natural question  being: when does a given invariant set support an (ergodic) invariant measure which realises its dimension?  This question can then take on different flavours depending on the dimensions involved.  Concerning Hausdorff dimension,  `realising the dimension'  usually means that the Hausdorff dimension of the measure equals the Hausdorff dimension of the set (a measure of maximal dimension).   For Assouad and lower dimension, `realising  the dimension'   means that the upper/lower regularity dimension of the measure equals the Assouad/lower dimension of the set (a measure of minimal/maximal dimension). 

 It is particularly interesting to us whether or not an invariant measure can simultaneously realise  all three of these dimensions when they are distinct. Previous examples seem to support a negative answer to this question.  For example, consider the self-affine carpets of Bedford-McMullen \cite{bedford, mcmullen}, which are invariant under the toral endomorphism  $(x,y) \mapsto (mx,ny)$.  It is well-known that there exists a unique invariant probability measure of maximal Hausdorff dimension, namely the \emph{McMullen measure}.  In the case where the carpet does not have uniform fibres, the Assouad, Hausdorff, and lower dimensions are necessarily distinct, see \cite[Corollary 2.14]{Fraser}.    It follows from \cite[Theorem 2.6]{fraserhowroyd} that the upper and lower regularity dimensions of the McMullen measure are always distinct from the Assouad and lower dimensions of the carpet, provided the \emph{very strong separation condition (VSSC)} is satisfied and the carpet does not have uniform fibres.  There are invariant  measures, introduced in \cite[Theorem 2.3]{fraserhowroyd2}, which simultaneously realise the Assouad and lower dimensions, provided the VSSC is satisfied.  These measures are known as \emph{coordinate uniform measures}, but are necessarily distinct from the McMullen measure and so do not realise the Hausdorff dimension.

We can provide the first example of a dynamically invariant measure which simultaneously realises the  (distinct) lower, Hausdorff and Assouad dimensions of its support, thus answering the above question in the affirmative.  However, as we shall see, this simultaneous realisation is still very rare in this context.  Note that $\ps$ \emph{always} realises the Hausdorff dimension of $\ls$.

\begin{cor} \label{achieve}
Let $\Gamma< \isom$ be a non-elementary geometrically finite Kleinian group which is not parabolic free.  Then
\[
\urd \ps = \ad \ls  \  \  \Leftrightarrow \ \   \delta(\Gamma) \leq  (\kmin+\kmax)/2
\]
and
\[
\lrd \ps = \ld \ls \  \  \Leftrightarrow \   \ \delta(\Gamma) \geq (\kmin+\kmax)/2.
\]
Therefore we have
\[
\lrd \ps = \ld \ls < \hd \ps = \hd \ls < \urd \ps = \ad \ls 
\]
if and only if $\kmin<\kmax$ and $\delta(\Gamma) = (\kmin+\kmax)/2$.
\end{cor}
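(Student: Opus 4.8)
The plan is to derive the corollary directly from the explicit formulas in Theorems \ref{reg} and \ref{ass} together with the fact, recalled in the introduction, that $\hd \ps = \hd \ls = \delta(\Gamma)$; no further hyperbolic geometry is needed, and the argument reduces to comparing piecewise-linear functions of $\delta(\Gamma)$. The single structural observation driving everything is that the phase transition $(\kmin+\kmax)/2$ of the regularity dimensions lies between the transitions $\kmin$ and $\kmax$ of the lower and Assouad dimensions, since $\kmin \leq (\kmin+\kmax)/2 \leq \kmax$.

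For the first equivalence I would compare $\urd \ps = \max\{\kmax,\, 2\delta(\Gamma)-\kmin\}$ with $\ad \ls = \max\{\kmax,\, \delta(\Gamma)\}$. When $\delta(\Gamma) \leq (\kmin+\kmax)/2$ the term $2\delta(\Gamma)-\kmin$ is at most $\kmax$, so both maxima equal $\kmax$ and the two dimensions agree. When $\delta(\Gamma) > (\kmin+\kmax)/2$ one has $2\delta(\Gamma)-\kmin > \kmax$, so $\urd \ps = 2\delta(\Gamma)-\kmin$; splitting into the subcases $\delta(\Gamma)\leq\kmax$ (where $\ad \ls = \kmax$) and $\delta(\Gamma)>\kmax$ (where $\ad \ls = \delta(\Gamma)$), in each subcase $2\delta(\Gamma)-\kmin$ strictly exceeds $\ad \ls$, using $\kmin \leq \kmax$ and $\delta(\Gamma)>\kmin$ respectively. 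This yields $\urd \ps \geq \ad \ls$ always, with equality exactly when $\delta(\Gamma)\leq(\kmin+\kmax)/2$. The second equivalence is the mirror image: comparing $\lrd \ps = \min\{\kmin,\, 2\delta(\Gamma)-\kmax\}$ with $\ld \ls = \min\{\kmin,\, \delta(\Gamma)\}$ by the same bookkeeping gives $\lrd \ps \leq \ld \ls$ always, with equality exactly when $\delta(\Gamma)\geq(\kmin+\kmax)/2$.

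For the final chain I would intersect the two equivalences: all four equalities hold simultaneously precisely when $\delta(\Gamma)=(\kmin+\kmax)/2$. Substituting this value gives $\ld \ls = \lrd \ps = \kmin$ and $\ad \ls = \urd \ps = \kmax$, while $\hd \ps = \hd \ls = \delta(\Gamma) = (\kmin+\kmax)/2$. The strict inequalities $\kmin < (\kmin+\kmax)/2 < \kmax$ then hold if and only if $\kmin < \kmax$, which completes the characterisation. There is no serious obstacle here: the only thing requiring care is the case splitting in the two equivalences and checking that the middle value $(\kmin+\kmax)/2$ is correctly positioned relative to $\kmin$ and $\kmax$; the constraint $\delta(\Gamma) > \kmax/2$ noted in the introduction is not even needed for this elementary comparison.
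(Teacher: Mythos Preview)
Your proposal is correct and is exactly the argument the paper intends: Corollary~\ref{achieve} is stated without proof as a direct consequence of Theorems~\ref{reg} and~\ref{ass}, and your comparison of the explicit piecewise-linear formulas, together with $\hd \ps = \hd \ls = \delta(\Gamma)$, is precisely how one reads it off. The only cosmetic point is that in your first subcase the strict inequality $2\delta(\Gamma)-\kmin > \kmax$ follows immediately from the hypothesis $\delta(\Gamma) > (\kmin+\kmax)/2$ rather than from $\kmin \leq \kmax$ per se, but this does not affect the validity of the argument.
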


Coming up with an explicit example where $\kmin<\kmax$ and $\delta(\Gamma) = (\kmin+\kmax)/2$ is not straightforward but can be achieved as a subgroup of $\text{Con}(2)$ by starting with a group with cusps of both possible ranks (1 and 2) and small $\delta(\Gamma)$, i.e. close to $\kmax/2 = 1$.  Then by varying some hyperbolic elements in a region which does not interfere with the cusps one can slowly increase $\delta(\Gamma)$ achieving $\delta(\Gamma) = 3/2$ at some point by the intermediate value theorem.  We do not pursue the details.

\textbf{Relationships between dimensions:}  It is a common consideration in dimension theory to identify possible relationships between dimensions in particular contexts, see for example \cite[Section 4]{Fraser}.  A succinct corollary of our main results demonstrates the following precise dichotomy for the dimensions of  limit sets of \emph{Fuchsian} groups, which is somewhat reminiscent of a known dichotomy for the dimensions of self-similar (and self-conformal) sets in $\mathbb{R}$, see \cite[Theorem 1.3]{fraserrobinson} and also \cite[Theorems A and B]{sascha} for the conformal case.

\begin{cor}
Let $\Gamma < \mathrm{Con}(1) \cong \textup{PSL}(2, \mathbb{R})$ be a non-elementary geometrically finite Fuchsian group with limit set $L(\Gamma)$ a proper subset of $ \mathbb{S}^1$. 
\begin{enumerate}
\item If $\Gamma$ is parabolic free, then
\[
0< \ld \ls = \hd \ls  = \ad \ls = \delta(\Gamma) < 1 .
\]
\item If $\Gamma$  contains a parabolic element, then
\[
1/2 < \ld \ls = \hd \ls = \delta(\Gamma) < \ad \ls =  1 .
\]
\end{enumerate}
\end{cor}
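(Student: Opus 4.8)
The plan is to read off both statements directly from Theorem \ref{ass} together with the Ahlfors--David regularity result for parabolic-free groups, specialising everything to boundary dimension $d=1$ and then supplying the strict inequalities by hand. The single structural simplification driving everything is that when $d=1$ the constraint $1 \le \kmin \le \kmax \le d$ collapses to $\kmin = \kmax = 1$: every parabolic fixed point of a Fuchsian group has rank exactly one, so every $k(p)=1$. Consequently, in the parabolic case the formulae of Theorem \ref{ass} become $\ad \ls = \max\{1,\delta(\Gamma)\}$ and $\ld \ls = \min\{1,\delta(\Gamma)\}$, and it only remains to locate $\delta(\Gamma)$ relative to $1$ and $1/2$.

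For part (1) I would invoke the parabolic-free theorem, which gives at once $\ld \ls = \hd \ls = \ad \ls = \delta(\Gamma)$, the coincidence with $\hd \ls$ being the classical Patterson--Sullivan identity. The only remaining content is the chain of strict inequalities $0 < \delta(\Gamma) < 1$. The upper bound is precisely Tukia's theorem \cite{tukia}: since $\ls$ is a proper subset of $\mathbb{S}^1$, we have $\delta(\Gamma) < d = 1$. For the lower bound I would use that a non-elementary geometrically finite (hence finitely generated) Fuchsian group has uniformly perfect limit set by \cite{uniperfect}, so $\ld \ls > 0$; as $\ld \ls = \delta(\Gamma)$ in the parabolic-free case, $\delta(\Gamma) > 0$. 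Equivalently, such a group contains a rank-two Schottky subgroup whose Cantor limit set already carries positive dimension.

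For part (2) I would apply Theorem \ref{ass} with $\kmin = \kmax = 1$. Tukia's theorem again yields $\delta(\Gamma) < 1$, whence $\max\{1,\delta(\Gamma)\} = 1$ and $\min\{1,\delta(\Gamma)\} = \delta(\Gamma)$, giving $\ad \ls = 1$ and $\ld \ls = \delta(\Gamma) = \hd \ls$. The lower bound $\delta(\Gamma) > 1/2$ comes from the general fact recorded in the introduction that $\delta(\Gamma) > k(p)/2$ for every parabolic fixed point $p$; since $k(p)=1$ here, $\delta(\Gamma) > 1/2$. Assembling these produces the chain $1/2 < \ld \ls = \hd \ls = \delta(\Gamma) < \ad \ls = 1$.

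There is no serious obstacle here, since the corollary is a genuine specialisation of the main theorems; the only thing requiring care is the strictness of each inequality. The two ingredients that are not purely formal are Tukia's equivalence (for $\delta(\Gamma)<1$) and the cusp bound $\delta(\Gamma) > k(p)/2$ (for $\delta(\Gamma)>1/2$), both already available in the excerpt, so the argument is short. One should also note that geometric finiteness forces finite generation, which is what licenses the appeal to uniform perfectness in part (1).
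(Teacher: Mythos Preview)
Your proposal is correct and follows exactly the intended route: the paper states this corollary without proof precisely because it is the specialisation of Theorem~\ref{ass} (and the parabolic-free Ahlfors--David regularity theorem) to $d=1$, together with Tukia's theorem and the cusp bound $\delta(\Gamma)>k(p)/2$ for the strict inequalities. The only minor remark is that $\delta(\Gamma)>0$ for a non-elementary group is already classical (the limit set is uncountable and perfect), so the detour through uniform perfectness in part~(1) is not strictly necessary, though it is perfectly valid.
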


We provide one more example,  which could be contrasted with, for example, a dichotomy observed by Mackay \cite[Theorem 1.1]{mackay} and \cite[Corollary 2.14]{Fraser}, which shows that for Bedford-McMullen carpets one either has the Assouad, lower and Hausdorff dimensions all equal or all distinct.

\begin{cor}
Let $\Gamma < \isom$ be a  non-elementary geometrically finite Kleinian group with at least one cusp, but with uniform cusp ranks, that is $\kmin = \kmax\geq 1$.  Then either
\[
\ld \ls = \hd \ls < \ad \ls \qquad \text{or} \qquad \ld \ls < \hd \ls = \ad \ls.
\]
\end{cor}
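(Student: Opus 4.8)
The plan is to read the three dimensions directly off the results already established and to observe that, under the uniform rank hypothesis, the Hausdorff dimension is always forced to coincide with one of the two extremes. Write $k := \kmin = \kmax \geq 1$. By Theorem \ref{ass} we have $\ad \ls = \max\{k, \delta(\Gamma)\}$ and $\ld \ls = \min\{k, \delta(\Gamma)\}$, while $\hd \ls = \delta(\Gamma)$ as recorded in the introduction. The key observation is that $\delta(\Gamma)$ is automatically equal to either $\min\{k,\delta(\Gamma)\}$ or $\max\{k,\delta(\Gamma)\}$, so $\hd \ls$ must coincide with either $\ld \ls$ or $\ad \ls$. Consequently the three dimensions can never be mutually distinct, which is exactly the content of the dichotomy; this should be contrasted with the Bedford--McMullen situation, where $\hd$ is a genuinely intermediate value and all three can be distinct.

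To make this explicit I would split into two cases according to the sign of $\delta(\Gamma) - k$. If $\delta(\Gamma) \leq k$, then $\hd \ls = \delta(\Gamma) = \min\{k,\delta(\Gamma)\} = \ld \ls$ while $\ad \ls = k$, so that $\ld \ls = \hd \ls \leq \ad \ls$, which is the first alternative (and the final inequality is strict precisely when $\delta(\Gamma) < k$). If instead $\delta(\Gamma) \geq k$, then $\hd \ls = \delta(\Gamma) = \max\{k,\delta(\Gamma)\} = \ad \ls$ while $\ld \ls = k$, so that $\ld \ls \leq \hd \ls = \ad \ls$, which is the second alternative (strict precisely when $\delta(\Gamma) > k$). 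In both cases the middle term $\hd \ls$ absorbs into the appropriate extreme, as predicted.

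Since the whole argument reduces to a case split together with an invocation of Theorem \ref{ass}, there is no genuine obstacle in the reasoning itself: all of the substantive work has already been carried out in deriving the Assouad and lower dimension formulae. The one point demanding care is the borderline value $\delta(\Gamma) = k$, at which $\ld \ls = \hd \ls = \ad \ls = k$ and the two alternatives collapse to the single equality of all three dimensions. I would therefore treat the dichotomy as the assertion that the three dimensions are never all distinct, with the two displayed alternatives describing the strict behaviour away from the boundary and the equal case recorded separately; in every case the operative conclusion, namely that $\ld \ls < \hd \ls < \ad \ls$ cannot occur when the cusp ranks are uniform, holds without exception.
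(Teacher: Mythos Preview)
Your approach is correct and is exactly what the paper intends: the corollary is stated there without proof as an immediate consequence of Theorem~\ref{ass}, and your case split on the sign of $\delta(\Gamma)-k$ is the natural way to unpack it. Your observation about the borderline $\delta(\Gamma)=k$ is well taken---the paper does not explicitly exclude this value, and there all three dimensions coincide so neither displayed alternative holds with its strict inequality; your reading of the dichotomy as ``the three dimensions are never all distinct'' is the appropriate interpretation.
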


\textbf{Relationships with local dimensions:} The regularity dimensions are related to the local dimensions.  The upper local dimension of a Borel measure $\mu$ at $x \in \text{supp}(\mu)$ is defined by
\[
\overline{\dim}_{\text{loc}}(\mu,x)=\limsup_{r\rightarrow 0} \frac{\log \mu(B(x,r))}{\log r}.
\]
The lower local dimension $\underline{\dim}_{\text{loc}}(\mu,x)$ is defined in a similar way, replacing $\limsup$ with $\liminf$.  It is straightforward to see that for any measure $\mu$
\[
\lrd \mu \leq \inf_{x} \lld (\mu, x) \leq  \sup_{x} \uld (\mu, x)  \leq \urd \mu,
\]
see for example \cite[Theorem 2.1]{fraserhowroyd} for the upper regularity dimension case.  Moreover, equality between any of the terms above can be interpreted as some form of homogeneity of $\nu$.  Such homogeneity is rare for Patterson-Sullivan measures associated to Kleinian groups with parabolic elements.

\begin{prop} \label{localdims}
Let $\Gamma< \isom$ be a non-elementary geometrically finite Kleinian group, which is not parabolic free.  Then
\[
\sup_{z \in \ls} \uld (\ps, z) =  \max\left\{\delta(\Gamma), \,  2\delta(\Gamma) - \kmin\right\}
\]
and
\[
\inf_{z \in \ls} \lld (\ps, z) = \min\left\{\delta(\Gamma), \,  2\delta(\Gamma) - \kmax\right\}.
\]
In particular,
\[
\urd \ps = \sup_{z \in \ls} \uld (\ps, z) \  \  \Leftrightarrow \ \   \delta(\Gamma) \geq (\kmin+\kmax)/2
\]
and
\[
\lrd \ps = \inf_{z \in \ls} \lld (\ps, z) \  \  \Leftrightarrow \   \ \delta(\Gamma) \leq (\kmin+\kmax)/2.
\]
\end{prop}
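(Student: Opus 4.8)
The plan is to feed the global measure formula (\ref{global}) directly into the definitions of the local dimensions. Writing $r = e^{-t}$ and recalling that $S(z,t)$ is a ball centred at $z$ of radius comparable to $e^{-t}$, the two-sided bound (\ref{global}) gives
\[
\frac{\log \ps(S(z,t))}{-t} = \delta(\Gamma) + \frac{\rho(z,t)}{t}\big(\delta(\Gamma) - k(z,t)\big) + \frac{O(1)}{t},
\]
where the $O(1)$ is controlled by the uniform constant $C$. Since a bounded distortion of the radius changes $\log r$ only by $O(1)$ and hence does not affect the limit, it follows that
\[
\uld(\ps,z) = \delta(\Gamma) + \limsup_{t\to\infty} \frac{\rho(z,t)}{t}\big(\delta(\Gamma) - k(z,t)\big),
\]
and similarly for $\lld(\ps,z)$ with $\liminf$. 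Thus everything reduces to understanding the asymptotics of the pair $(\rho(z,t), k(z,t))$ along geodesic rays.

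Second, I would record the geometric behaviour of $\rho$ and $k$. Since $\mathbf{0}$ lies outside the interior of every standard horoball, the distance from $z_t$ to the complement of any horoball is at most $\hm(\mathbf{0}, z_t) = t$, giving the uniform bound $0 \le \rho(z,t)/t \le 1$. At a parabolic fixed point $p$ of rank $k(p)$, the ray joining $\mathbf{0}$ to $p$ enters $H_p$ at some bounded time and then runs straight to the base point, so $k(z,t) \equiv k(p)$ for large $t$ and, by the explicit horoball computation (passing to the upper half-space model with $p = \infty$), $\rho(p,t) = t - O(1)$, whence $\rho(p,t)/t \to 1$ and $\uld(\ps,p) = \lld(\ps,p) = 2\delta(\Gamma) - k(p)$. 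At the other extreme, for any non-parabolic $z \in \ls$ (of which there are uncountably many) the ray to $z$ cannot remain inside a single horoball forever, since it would then converge to its base point; as the horoballs are pairwise disjoint it must return to the complement of every horoball along a sequence of times $t_n \to \infty$, at which $\rho(z,t_n) = 0$.

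Third comes the optimisation. For the upper formula, since $k(z,t) \ge \kmin$ whenever $z_t$ lies in a horoball and $\rho = 0$ otherwise, the term $\frac{\rho}{t}(\delta(\Gamma) - k(z,t))$ is at most $\max\{0, \delta(\Gamma) - \kmin\} + o(1)$, giving $\sup_z \uld(\ps,z) \le \max\{\delta(\Gamma), 2\delta(\Gamma) - \kmin\}$. For the matching lower bound, if $\delta(\Gamma) \ge \kmin$ a parabolic point of rank $\kmin$ realises $2\delta(\Gamma) - \kmin$, while if $\delta(\Gamma) < \kmin$ the term is nonpositive and vanishes along the thick-part times $t_n$ of any non-parabolic $z$, realising $\delta(\Gamma)$. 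The lower formula is entirely symmetric: one bounds the term below by $\min\{0, \delta(\Gamma) - \kmax\}$ and realises the extreme either at a parabolic point of rank $\kmax$ (when $\delta(\Gamma) \le \kmax$) or at a non-parabolic $z$ (when $\delta(\Gamma) > \kmax$).

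Finally, the two ``in particular'' equivalences follow by comparing these formulae with the regularity dimensions computed in Theorem \ref{reg}, namely $\urd \ps = \max\{\kmax, 2\delta(\Gamma) - \kmin\}$ and $\lrd \ps = \min\{\kmin, 2\delta(\Gamma) - \kmax\}$. A short case analysis splitting on whether $\delta(\Gamma)$ exceeds $(\kmin+\kmax)/2$ shows that $\max\{\kmax, 2\delta(\Gamma)-\kmin\} = \max\{\delta(\Gamma), 2\delta(\Gamma) - \kmin\}$ precisely when $2\delta(\Gamma) - \kmin \ge \kmax$, that is $\delta(\Gamma) \ge (\kmin+\kmax)/2$, and dually for the lower dimensions. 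The main obstacle is the geometric input of the second step, quantifying the growth rate of $\rho(z,t)$ relative to $t$ inside horoballs (the linear growth $\rho(p,t) \sim t$ at parabolic points together with the uniform bound $\rho \le t$); once this control is in place the extremal analysis and the reconciliation with Theorem \ref{reg} are elementary.
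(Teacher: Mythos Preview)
Your proposal is correct and follows essentially the same route as the paper: feed the global measure formula into the local dimension definitions to reduce to the asymptotics of $\rho(z,t)(\delta(\Gamma)-k(z,t))/t$, bound this using $0 \le \rho(z,t) \le t$, and realise the extremes at parabolic fixed points via $\rho(p,t)/t \to 1$ (the paper packages this as Lemma~\ref{parapoint}). The one minor difference is in how the value $\delta(\Gamma)$ is attained: the paper simply cites the exact-dimensionality of $\ps$ from \cite{stratmannvelani} to say $\ps$-typical points have local dimension $\delta(\Gamma)$, whereas you give a self-contained geometric argument that any non-parabolic ray must exit every horoball it enters (by convexity and disjointness), producing times $t_n \to \infty$ with $\rho(z,t_n)=0$. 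Your version is more elementary and avoids the external citation; the paper's is shorter. Either works, and the ``in particular'' equivalences follow by the same case split against Theorem~\ref{reg} that you describe.
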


The calculation of the extremal upper and lower dimensions is not new, see for example \cite{stratmann}, but we include the explicit calculation for completeness.  However, we delay this until Section \ref{localdimsproof} since it relies on observations we make during the proof sections.

\subsection{Examples}

In order to provide a visual picture for the statements of Theorems \ref{reg} and \ref{ass}, we plot the dimensions in three  distinct  cases: $\kmin< \kmax/2$, $\kmax/2 < \kmin < \kmax$ and $\kmin=\kmax$.  These plots will be useful to keep in mind when reading the subsequent proofs.

\begin{figure}[H]
  \centering
  \includegraphics[width= \textwidth]{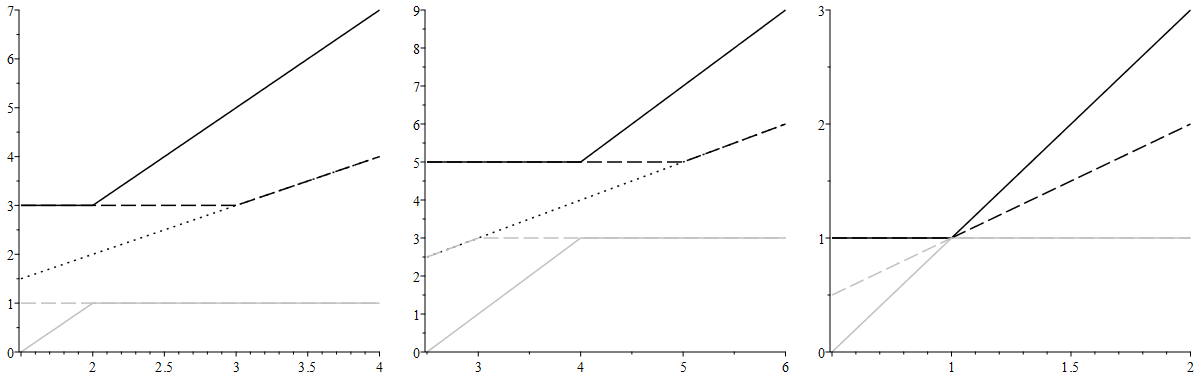}
\caption{Three plots showing the dimensions we study as functions of $\delta(\Gamma) \in (\kmax/2, d]$.  The regularity dimensions of $\ps$ are plotted with solid lines, the Assouad and lower dimensions of $\ls$ are plotted with dashed lines, and $\delta(\Gamma)$ is plotted with a dotted line (for reference).  The upper regularity and Assouad dimensions are plotted in black and the lower regularity and lower dimensions are plotted in grey.  Left: $\kmin = 1< \kmax = 3$ and $d=4$.  Centre:  $\kmin = 3< \kmax = 5$ and $d=6$.  Right: $\kmin =  \kmax = 1$ and $d=2$. }
\label{fig:plots}
\end{figure}

Also for aesthetic purposes, we close this section by discussing a famous example.  The \emph{Apollonian gasket} or \emph{Apollonian circle packing}, see Figure \ref{fig:app}, is a well-known geometric object formed by starting with 4 mutually tangent circles lying in $\mathbb{C}$, one containing the other three, and then inductively adding in circles of the largest possible radius which lie tangent to three previously added circles.   See \cite{pollicott} for an interesting discussion of Apollonian packings ranging from their genesis to problems at the forefront of modern mathematics and \cite{parker2} for more on the visualisation of Apollonian circle packings as well as  other Kleinian limit sets. It is well-known that given any two circle packings formed in this way there is a M\"obius transformation $g \in \text{PSL}(2,\mathbb{C} )$  taking one to the other, that is, there is a unique circle packing up to M\"obius images.  Therefore we may talk about \emph{the} Apollonian circle packing and note that it is the limit set of a geometrically finite Kleinian group $\Gamma < \text{PSL}(2,\mathbb{C} ) \cong \text{Con}(2)$, sometimes known as the \emph{Apollonian group}.  

The parabolic fixed points are the points of mutual tangency between two circles in the packing and it is straightforward to see that the rank of each of these points is 1, and therefore $\kmin = \kmax = 1$. Estimating the Poincar\'e exponent for this group is difficult, but has received a lot of attention in the literature and very good bounds are now available.  In particular, $\delta(\Gamma) \approx 1.305 \dots$, see  \cite{mcmullen2}.  We note that the Poincar\'e exponent can also be computed as the circle packing exponent, which is somewhat easier to handle computationally, see \cite{boyd, parker1}. Therefore
\[
\urd \ps = 2 \delta(\Gamma) - 1 \approx 1.61 \dots 
\]
\[
\lrd \ps = \ld \ls = 1
\]
\[
\ad \ls = \delta(\Gamma) \approx 1.305 \dots
\]

\begin{figure}[H]
  \centering
  \includegraphics[width= 0.5\textwidth]{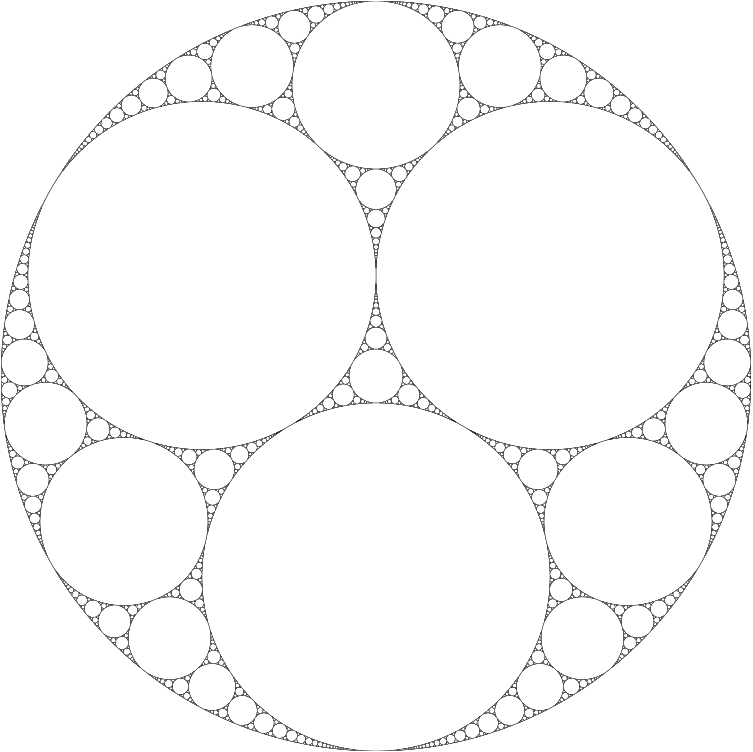}
\caption{An Apollonian circle packing viewed as the limit set of the Apollonian group acting on $\mathbb{H}^3$.  }
\label{fig:app}
\end{figure}

\subsection{The geometrically infinite case} \label{infinite}

In this section we briefly discuss the geometrically infinite case.  Limit sets of non-elementary geometrically infinite Kleinian groups are not as well-understood as the geometrically finite case. They can also exhibit many different features, not present in the geometrically finite case, for example, one generally has $ \delta(\Gamma) \leq \hd \ls \leq \bd \ls$, but either or both of these inequalities can be strict.  It was recently shown by Falk and Matsuzaki \cite[Theorem 1]{kurt} that the (upper) box dimension of the limit set is given by the \emph{convex core entropy}, $h_c(\Gamma)$, see \cite[Definition 3.1]{kurt}.  This result, combined with the observation that our proof of the \emph{lower} bound for the Assouad dimension of the limit set does not use geometric finiteness, yields the following estimate.
\begin{cor}
If $\Gamma < \isom$ is a non-elementary Kleinian group, then
\[
\ad \ls \geq   \max\{ \kmax, h_c(\Gamma)\}.
\]
\end{cor}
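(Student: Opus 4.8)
The plan is to prove the two lower bounds $\ad \ls \geq h_c(\Gamma)$ and $\ad \ls \geq \kmax$ separately and then combine them. For the entropy bound I would use the elementary and well-known fact that, for any bounded set $F$ in a metric space, the upper box dimension is dominated by the Assouad dimension, $\bd F \leq \ad F$. This is immediate from the definition of $\ad$: taking the outer scale $R$ comparable to $|F|$ and covering $F$ by boundedly many balls of this radius (possible since finite Assouad dimension forces doubling), the uniform estimate $N_r(B(x,R)\cap F) \leq C (R/r)^{\ad F}$ becomes a box-counting estimate $N_r(F) \lesssim r^{-\ad F}$ for all small $r$. Applying this to the compact set $\ls$ and invoking the theorem of Falk and Matsuzaki \cite[Theorem 1]{kurt}, which identifies the upper box dimension $\bd \ls$ with the convex core entropy $h_c(\Gamma)$, yields $\ad \ls \geq \bd \ls = h_c(\Gamma)$ at once.

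For the rank bound I would return to the lower-bound portion of the proof of Theorem \ref{ass}, carried out in Section \ref{assproof}, where $\ad \ls \geq \kmax$ is established by examining $\ls$ near a parabolic fixed point $p$ of maximal rank $\kmax$. The point to emphasise is that this argument is purely local and geometric: it uses only the lattice-like arrangement of the orbit of nearby limit points under the rank-$\kmax$ free abelian group of parabolics fixing $p$, which produces, at a suitable pair of scales $0<r<R$, of order $(R/r)^{\kmax}$ well-separated pieces of $\ls$ inside a single ball, and hence the covering lower bound $N_r(B(x,R)\cap \ls) \gtrsim (R/r)^{\kmax}$. None of the ingredients---the existence of a parabolic fixed point, the fact that its stabiliser is virtually free abelian of a well-defined rank (resting only on discreteness, as observed in the introduction), and the induced orbit geometry---requires $\Gamma$ to be geometrically finite. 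Hence the inequality $\ad \ls \geq \kmax$ holds for every non-elementary Kleinian group with a parabolic element (and the $\kmax$ term is simply absent otherwise), and combining it with the entropy bound gives the assertion.

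The main obstacle is to confirm that the lower-bound argument of Theorem \ref{ass} genuinely avoids geometric finiteness. In the geometrically finite case one is tempted to read the local cusp structure off the global measure formula (\ref{global}), whose validity relies both on geometric finiteness and on the existence of the Patterson-Sullivan measure $\ps$, objects not available for a general Kleinian group. The delicate step is therefore to verify that the estimate $N_r(B(x,R)\cap \ls) \gtrsim (R/r)^{\kmax}$ can be extracted directly from the parabolic orbit and the shadows it casts near $p$, independently of any measure-theoretic input. Once this separation is made, the remaining steps are routine.
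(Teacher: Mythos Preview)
Your proposal is correct and matches the paper's approach exactly: the paper derives the corollary from Falk--Matsuzaki's identification $\bd \ls = h_c(\Gamma)$ together with the observation that the lower-bound argument for $\ad \ls \geq \kmax$ in Section~\ref{assproof} does not use geometric finiteness. Your ``main obstacle'' is in fact a non-issue, since that lower-bound argument (conjugating $p$ to $\infty$ and noting $\ls \supset F_{\max}(z) \cong \mathbb{Z}^{\kmax}$) is entirely measure-free and never invokes the global measure formula or shadows.
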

It is natural to ask if equality holds here, but this turns out not to be true in general. We demonstrate this by example at the end of this section.

A weakening of geometric finiteness is the concept of \emph{conformal finiteness}, introduced by  Chang, Qing and  Yang \cite[Definition 3.2]{chang}, which extends the older notion of \emph{analytic finiteness} for  subgroups of $\text{Con}(2)$.  In particular, all finitely generated Kleinian groups $\Gamma < \text{Con}(2)$ are analytically and conformally finite (this is known as Ahlfors finiteness theorem and is known to fail in higher dimensions, see \cite{kapovich}).   It is shown in \cite[Theorem 0.1]{chang} that  if $\Gamma < \isom$ is conformally finite and $\hd \ls < d$, then it is geometrically finite.  This result was proved for $d \leq 2$ by Bishop and Jones \cite[Theorem 1.2]{bishopjones}.  In particular, the result of Chang-Qing-Yang   combined with our Theorem \ref{ass} provides the following corollary.
\begin{cor}
If $\Gamma < \isom$ is a non-elementary conformally finite Kleinian group, then
\[
\ad \ls = \max\{ \kmax, \, \hd \ls\}.
\]
\end{cor}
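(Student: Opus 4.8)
The plan is to use the Chang--Qing--Yang dichotomy to reduce to the geometrically finite setting wherever it is available, and to dispose of the remaining case by a direct dimension comparison. Accordingly, I would split the argument according to whether $\hd \ls < d$ or $\hd \ls = d$. The point is that conformal finiteness on its own does not give us access to the global measure formula, so we must first convert it into geometric finiteness, and the cited theorem of Chang--Qing--Yang is precisely the tool that does this, but only under the hypothesis $\hd \ls < d$.

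First, suppose $\hd \ls < d$. By \cite[Theorem 0.1]{chang}, conformal finiteness together with $\hd \ls < d$ forces $\Gamma$ to be geometrically finite, so in particular $\hd \ls = \delta(\Gamma)$. If $\Gamma$ is parabolic free, then $\ps$ is $\delta(\Gamma)$-Ahlfors--David regular by the parabolic-free theorem, whence $\ad \ls = \delta(\Gamma) = \hd \ls$; since there are no cusps we read $\kmax = 0$, so $\max\{\kmax, \hd \ls\} = \hd \ls$ and the formula holds. If instead $\Gamma$ contains a parabolic element, then Theorem \ref{ass} gives $\ad \ls = \max\{\kmax, \delta(\Gamma)\}$, which equals $\max\{\kmax, \hd \ls\}$ because $\delta(\Gamma) = \hd \ls$ in the geometrically finite regime. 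In either subcase the claimed identity holds.

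Second, suppose $\hd \ls = d$. Here geometric finiteness is no longer guaranteed, but the conclusion becomes immediate: since $\ls$ is a closed subset of $\mathbb{S}^d$, monotonicity of the Assouad dimension together with $\ad \mathbb{S}^d = d$ yields $d = \hd \ls \leq \ad \ls \leq d$, so $\ad \ls = d$. As every parabolic rank satisfies $\kmax \leq d$, the right-hand side is $\max\{\kmax, d\} = d$, matching $\ad \ls$. Thus the full-dimension case collapses both sides to $d$ via the elementary bound $\hd \leq \ad$.

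I expect no serious obstacle here: the substantive content is already carried by Theorem \ref{ass} and by the cited Chang--Qing--Yang theorem, so the role of this proof is to combine them correctly. The only points requiring genuine care are the bookkeeping between $\delta(\Gamma)$ and $\hd \ls$ (which agree precisely because the reduction has placed us in the geometrically finite case) and the observation that the troublesome case where geometric finiteness fails is exactly the case $\hd \ls = d$, in which both sides of the desired equality are forced to equal $d$; this is what makes the dichotomy close.
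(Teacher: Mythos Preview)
Your proposal is correct and follows exactly the approach the paper intends: the corollary is stated immediately after the Chang--Qing--Yang result precisely because one splits into the case $\hd \ls < d$ (where conformal finiteness upgrades to geometric finiteness and Theorem~\ref{ass} applies) and the trivial full-dimension case $\hd \ls = d$. Your handling of the parabolic-free subcase and the convention $\kmax = 0$ there is the natural reading of the formula, and the rest matches the paper's implicit argument.
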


Finally we present a simple example illustrating the wildness of infinitely generated Kleinian groups, see \cite{matsu} for discussion of the Hausdorff dimension.  Specifically, for any $0<\alpha<\beta<1$, we sketch the construction of an infinitely generated Fuchsian group $\Gamma < \text{PSL}(2, \mathbb{R}) \cong \text{Con}(1)$ with
\[
 \ld \ls = 0  < \hd \ls \leq \alpha < \beta \leq \bd \ls < \ad \ls = 1.
\]
Moreover, $\Gamma$ will be parabolic free and so this shows that the Assouad dimension can be large for reasons other than parabolic points in the infinitely generated case. Of course there is a natural duality between parabolic systems  and infinitely generated systems (for example, via `inducing schemes') and so it is really just two sides of the same coin.  By the result of Falk and Matsuzaki mentioned above this example also demonstrates that $\ad \ls > \max\{\kmax, h_c(\Gamma)\}$ is possible  in the infinitely generated case.  It also shows that limit sets of infinitely generated Fuchsian groups need not be uniformly perfect (they can have lower dimension equal to 0).  This observation is not new, see for example \cite[Example 7.1]{sugawa}.  Finally, this  example also demonstrates that for  infinitely generated $\Gamma < \isom$, the difference $\ad \ls - \hd \ls$ can approach $d$, whereas in the geometrically finite case it can only approach $d/2$, see Theorem \ref{ass} noting that $\delta(\Gamma) > \kmax/2$.  In the geometrically finite case the (potentially) larger difference $\ad \ls - \ld \ls$ is bounded above by $d-1$ and this bound is achieved precisely when $\kmin=1<d=\kmax$, whereas in the geometrically infinite case it can be $d$.

Fix $0<\alpha<\beta<1$ and set $\gamma = 1/\beta - 1>0$.  For integer $n \geq 1$, let $x_n = x_n(\gamma) = 1/n^\gamma$ and $0< r_n = r_n(\alpha,\gamma) < e^{-n}$ be very small radii, chosen so that the balls $B(x_n,r_n)$ are pairwise disjoint.  Let $g_n: \mathbb{H}^2 \to \mathbb{H}^2$  be defined by reflecting in the boundary of the ball $B(x_n, r_n)$ (an orientation reversing M\"obius transformation).  Since the balls $B(x_n,r_n)$ are pairwise disjoint the group
\[
\Gamma' = \langle g_n \ :  \ n \geq 1 \rangle
\]
is discrete.  Moreover, $\Gamma'$ has an index 2 subgroup $\Gamma < \Gamma'$ consisting of orientation preserving isometries which is a Fuchsian  subgroup of $\text{PSL}(2, \mathbb{R}) $.  It is easy to see that
\[
\ls \subseteq \cup_n B(x_n, r_n) \cup \{0\}
\]
and that for all $n \geq 1$, $\ls \cap B(x_n, r_n) \neq \emptyset$.  By considering the dimensions of the set of centres  $\{x_n \}_{ n \geq 1}$ this is already enough to prove that
\[
\bd \ls \geq \frac{1}{1+\gamma} = \beta
\]
and $\ad \ls = 1$.  Moreover, since the radii $r_n$ decay exponentially and the gaps between the balls only decay polynomially, it follows that $\ld \ls = 0$.  Indeed, $N_{e^{-n}}(B(x_n,n^{-(\gamma+1)})) \lesssim 1$.  Finally, $\hd \ls$ can be made arbitrarily small by choosing the radii $r_n$ small enough. With a little more work one can show that the box dimension is indeed controlled by the box dimension of the set of fixed points of the generators, and therefore is precisely $\beta = h_c(\Gamma) < 1 = \ad \ls$.  See \cite{urbanski} for more general settings where the box dimension of infinitely generated limit sets  is controlled by the maximum of the Hausdorff dimension and the box dimension of the set of fixed points.

\section{Notation and preliminary results}

Throughout the rest of the paper we will write $A \lesssim B$ to mean that there exists a universal constant $c\geq 1$ such that $A \leq cB$.  In particular, $c$ is allowed to depend on parameters fixed in the hypotheses of the theorems given above, such as the group $\Gamma$, and  ambient spatial dimension $d$.  The constant $c$ is \emph{not} allowed to depend on parameters introduced during the proof, most importantly the scales $R,r$ or (logarithmic) scales $T, t$ or on particular points $z \in \mathbb{D}^{d+1} \cup \mathbb{S}^d$.  We write $A \gtrsim B$ to mean $B \lesssim A$ and $A \approx B$ to mean $A \lesssim B$ and $A \gtrsim B$.

We begin by reformulating the statement of the  global measure formula, which also serves as a crucial example using the notation described above.   It follows immediately from (\ref{global}) that 
\begin{equation} \label{global3}
\ps(B(z,e^{-t}))  \ \approx \ \exp(-t \delta(\Gamma)- \rho(z,t) (\delta(\Gamma) - k(z,t)))
\end{equation} 
 for all $z \in \ls$ and $t>0$  where $B(z,e^{-t})$ is the Euclidean ball centred at $z$ with radius $e^{-t}$.   Note that the implied constants  only depend on the group $\Gamma$ and the choice of standard horoballs (which we may assume depends on the group). The implied constants do not depend on $z$ or $t$.

Since the global measure formula is most conveniently expressed in terms of logarithmic scales $t>0$, that is, balls with radius $e^{-t}$, we adopt this convention whenever we use (\ref{global3}).  In particular, when computing the upper and lower regularity dimensions we will use a `large' scale $R = e^{-t}$  and a `small' scale $r=e^{-T}$ for $T \geq t >0$.  Therefore to prove that $\urd \ps \leq \alpha$, say, it suffices to prove that
\[
\frac{\ps (B(z,e^{-t}))}{\ps (B(z,e^{-T}))} \ \lesssim \ \left(\frac{e^{-t}}{e^{-T}} \right)^{\alpha}
\]
for all $z \in \ls$ and $T > t >0$, whereas to prove that $\urd \ps \geq \beta$, say, it suffices to prove that
\[
\frac{\ps (B(z,e^{-t}))}{\ps (B(z,e^{-T}))} \ \gtrsim \ \left(\frac{e^{-t}}{e^{-T}} \right)^{\beta}
\]
for infinitely many $z \in \ls$ and $T > t >0$ with $T-t \to \infty$.

We may assume without loss of generality that $ \mathbf{0} \notin H_p$ for all $p$, which means that $\rho(z,t) \leq \hm (z_t,0) \leq t$ for all $z \in \ls$ and $t >  0$.  The rest of this section is devoted to establishing simple estimates for the `escape functions' $\rho(z,t)$, which will be used throughout the rest of the paper.

\begin{lma}[Quick escape lemma]\label{escape}
Let $t_1, t_2 > 0$ and $z \in \ls$.  If $z_{t_1}$ and $z_{t_2}$ do not lie in a common standard horoball, then
\[
\rho(z, t_1)+\rho(z, t_2)  \ \leq \  |t_1-t_2 |
\]
and if $z_{t_1}$ and $z_{t_2}$ do lie in a common standard horoball, then
\[
|\rho(z,t_1)-\rho(z,t_2) |  \ \leq \  |t_1-t_2 |.
\]
\end{lma}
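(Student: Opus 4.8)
The plan is to read both inequalities as manifestations of a single fact: the segment of the geodesic ray between $z_{t_1}$ and $z_{t_2}$ is itself a geodesic of hyperbolic length exactly $|t_1-t_2|$, and $\rho(z,t)$ is, whenever $z_t$ lies in a horoball $H_p$, nothing but the distance $\hm(z_t,H_p^c)$ from $z_t$ to the complement of the (by disjointness, unique) horoball containing it, and $0$ otherwise. Throughout I would parametrise this segment by arc length as $\gamma\colon[0,L]\to\mathbb{D}^{d+1}$ with $\gamma(0)=z_{t_1}$, $\gamma(L)=z_{t_2}$ and $L=|t_1-t_2|$, so that $\hm(\gamma(0),\gamma(s))=s$ and $\hm(\gamma(L),\gamma(s))=L-s$ for every $s$. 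Both parts then reduce to the $1$-Lipschitz behaviour of distance-to-a-set along $\gamma$; the only difference is how many horoballs are involved.

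For the second inequality (common horoball $H_p$) I would simply use that $y\mapsto\hm(y,H_p^c)$ is $1$-Lipschitz. Concretely, for $\eps>0$ choose $w\in H_p^c$ with $\hm(z_{t_2},w)\le\rho(z,t_2)+\eps$; then $\rho(z,t_1)\le\hm(z_{t_1},w)\le\hm(z_{t_1},z_{t_2})+\hm(z_{t_2},w)\le|t_1-t_2|+\rho(z,t_2)+\eps$. Letting $\eps\to0$ and symmetrising in $t_1,t_2$ gives $|\rho(z,t_1)-\rho(z,t_2)|\le|t_1-t_2|$. This direction is routine.

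The first inequality is the substantive one, and the main case is when $z_{t_1}\in H_{p_1}$ and $z_{t_2}\in H_{p_2}$ with $p_1\ne p_2$; if either point lies in no horoball the corresponding $\rho$ vanishes and the claim reduces to a single first-exit estimate of the type below (or is trivial when both vanish). Let $\sigma_1=\inf\{s:\gamma(s)\notin H_{p_1}\}$ be the first time $\gamma$ leaves $H_{p_1}$ and $\sigma_2=\sup\{s:\gamma(s)\notin H_{p_2}\}$ the last time it is still outside $H_{p_2}$. Approaching $\sigma_1$ from above by points $\gamma(s_n)\in H_{p_1}^c$ gives $\rho(z,t_1)\le\hm(z_{t_1},\gamma(s_n))=s_n\to\sigma_1$, so $\rho(z,t_1)\le\sigma_1$; symmetrically $\rho(z,t_2)\le L-\sigma_2$. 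The crucial step is the ordering $\sigma_1\le\sigma_2$: for every $s<\sigma_1$ we have $\gamma(s)\in H_{p_1}$, hence $\gamma(s)\notin H_{p_2}$ because the standard horoballs are pairwise disjoint, so $s$ competes in the supremum defining $\sigma_2$ and $s\le\sigma_2$. Adding the two estimates yields $\rho(z,t_1)+\rho(z,t_2)\le\sigma_1+(L-\sigma_2)\le L=|t_1-t_2|$.

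I expect the only genuine obstacle to be this disjointness-to-ordering step, since it is exactly where the hypothesis that $z_{t_1},z_{t_2}$ do not lie in a common horoball is used, and it is what upgrades the Lipschitz bound of the second part into the additive bound of the first. A secondary point requiring care is the open/closed bookkeeping around $\partial H_p$: because $H_p$ is closed, the crossing point $\gamma(\sigma_i)$ lies in $H_p$ rather than $H_p^c$, which is why I pass to a sequence $\gamma(s_n)\in H_p^c$ and use the infimum definition of $\rho$ rather than evaluating at the boundary. I note that convexity of horoballs is not logically needed here — only their pairwise disjointness and the arc-length parametrisation — although convexity does supply the cleaner picture in which $\gamma$ meets each horoball in a single subinterval, so that ``first exit'' and ``last entry'' are honest single crossings.
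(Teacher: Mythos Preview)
Your proof is correct and follows essentially the same approach as the paper: both exploit that the points $z_{t_1},z_{t_2}$ lie on a common geodesic of length $|t_1-t_2|$, use the triangle-inequality/Lipschitz bound for the common-horoball case, and in the disjoint case use pairwise disjointness of the standard horoballs to locate an intermediate point on the geodesic lying outside the relevant horoball(s) (the paper finds a single $t_0$ with $z_{t_0}$ outside the interior of every horoball, whereas you use first-exit and last-entry times $\sigma_1\le\sigma_2$, which is the same idea). Your treatment of the open/closed boundary issue via an $\eps$-approximation is more explicit than the paper's, but there is no substantive difference.
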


\begin{proof}
If $z_{t_1}$ and $z_{t_2}$ do not lie in the same horoball, then we can find $t_0$ lying between  $t_1$ and $t_2$ such that $z_{t_0}$ does not lie in the interior of any horoball. It follows that
\begin{eqnarray*}
\rho(z, t_1) +\rho(z, t_2) \  \leq  \   \hm(z_{t_1}, z_{t_0}) +  \hm(z_{t_0}, z_{t_2})   &=&  \hm(z_{t_1}, z_{t_2})   \\ &=&   |\hm(z_{t_1},0)  - \hm(0, z_{t_2}) | \\ &=&   |t_1-t_2 |
\end{eqnarray*}
since $ \mathbf{0}, z_{t_1}, z_{t_2},  z_{t_0}$ lie on the same geodesic.

Now suppose $z_{t_1}, z_{t_2} \in H_p$ for some standard horoball $H_p$.  Then $z_{t_1}$ can escape $H_p$ by first going to $z_{t_2}$ and then escaping from there via the most efficient route.  Therefore
\[
\rho(z,t_1) \leq \hm(z_{t_1}, z_{t_2}) + \rho(z,t_2) =   |t_1-t_2 | + \rho(z,t_2)
\]
and the result follows by symmetry.
\end{proof}

\begin{lma}[Parabolic centre lemma]\label{parapoint}
Suppose $p$ is a parabolic fixed point associated to a standard horoball $H_p$.  Then $\rho(p,t) \sim t$ as $t \to \infty$ and for all sufficiently large $t>0$ we have $k(p,t) = k(p)$.
\end{lma}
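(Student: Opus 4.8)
The plan is to pass to the upper half-space model $\mathbb{H}^{d+1}$ and move the parabolic fixed point $p$ to $\infty$, where both assertions become transparent. Since the two models are isometric, I would fix an isometry $g$ from the ball model onto $\mathbb{H}^{d+1}$ with $g(p) = \infty$; it carries hyperbolic distances, geodesics and standard horoballs to their counterparts, so both $\rho(p,t)$ and $k(p,t)$ are unchanged and it suffices to verify the claims in these coordinates. Under $g$ the geodesic ray from $\mathbf{0}$ to $p$ becomes the vertical ray emanating upward from $q := g(\mathbf{0})$, and the standard horoball $H_p$, being based at $p = \infty$, becomes an upper half-space $\{(x,y) : y \geq h\}$ for some height $h > 0$; note that $q$ has height $y_q \in (0,h)$ because $\mathbf{0} \notin H_p$.

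Recalling that the half-space metric $ds = |dz|/y$ gives hyperbolic length $|\log(y_2/y_1)|$ along a vertical segment, the point $g(p_t)$ sits at height $y_q e^{t}$. This height is strictly increasing in $t$, so once $y_q e^t > h$ — that is, for all sufficiently large $t$ — we have $g(p_t) \in g(H_p)$, and since the standard horoballs are pairwise disjoint $g(p_t)$ can lie in no other horoball. Hence $k(p,t) = k(p)$ for all large $t$, which settles the second assertion.

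For the asymptotic $\rho(p,t) \sim t$ I would compute, for such large $t$, the hyperbolic distance from $g(p_t)$ to the complement of $g(H_p)$, which equals its distance to the bounding horosphere $\{y = h\}$. Using $\cosh \hm((x_1,y_1),(x_2,y_2)) = 1 + (\|x_1 - x_2\|^2 + (y_1-y_2)^2)/(2 y_1 y_2)$ and minimising over $\{y = h\}$, the minimum is attained directly below $g(p_t)$ and equals $\log(y_q e^{t}/h) = t + \log(y_q/h)$. Thus $\rho(p,t) = t + O(1)$, so $\rho(p,t)/t \to 1$ as $t \to \infty$, giving $\rho(p,t) \sim t$ — indeed with an explicit additive constant, which is stronger than required, and consistent with the a priori bound $\rho(p,t) \leq t$ since $y_q < h$.

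Each step is short; the only genuine points of care are confirming that the vertical geodesic enters $H_p$ and stays there (immediate from monotonicity of the height) and the elementary fact that the distance from an interior point of a horoball based at $\infty$ to its horosphere is realised by dropping straight down the vertical geodesic. I expect this last identification — pinning $\rho(p,t)$ to the orthogonal horosphere distance rather than merely bounding it — to be the main, if minor, obstacle.
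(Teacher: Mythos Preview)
Your proof is correct and follows essentially the same approach as the paper: both identify the entry time $s$ of the ray into $H_p$ and show $\rho(p,t)=t-s$ for $t\ge s$ by recognising that the geodesic from $\mathbf{0}$ to $p$ meets the horosphere orthogonally. The only difference is cosmetic---you pass to the upper half-space model to make the orthogonality and the distance computation explicit, whereas the paper invokes the normality directly in the ball model.
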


\begin{proof}
Let $s>0$ be such that $p_s$ is the point of intersection of the ray joining $ \mathbf{0}$ and $p$ with the boundary of the horoball $H_p$. It follows that $p_t \in H_p \Leftrightarrow t \geq s$ and therefore for $t \geq s$, we have $k(p,t) = k(p)$.  Moreover, since the geodesic joining $p_t$ and $p_s$ is normal to the boundary of $H_p$,
\[
1 \geq \frac{\rho(p,t)}{t} =\frac{\hm(p_t, p_s)}{t} =  \frac{t-s}{t} \to 1
\]
as $t \to \infty$, as required.
\end{proof}

We will also need a version of Lemma \ref{escape} for when the point $z$ is not fixed. We state and prove this version separately for clarity.

\begin{lma}[Quick escape lemma II]\label{escape2}
Let $T>t>0$ and $x,y  \in \ls$ with $\| x-y \| \leq 2 e^{-t}$.  If $x_t$ and $y_T$ do not lie in a common standard horoball, then
\[
\rho(x,t) + \rho(y,T)  \ \leq  \ T-t+10
\]
and if $x_t$ and $y_T$ do lie in a common standard horoball, then
\[
|\rho(x,t)-\rho(y,T) |  \ \leq   \  T-t + 10.
\]
\end{lma}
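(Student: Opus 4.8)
The plan is to follow the structure of Lemma \ref{escape}, the only genuinely new ingredient being control of how far the two radial geodesics have spread apart by hyperbolic time $t$. It is convenient to introduce the \emph{escape function} $\tilde\rho\colon\mathbb{D}^{d+1}\to[0,\infty)$ defined by $\tilde\rho(w)=\hm(w,\mathbb{D}^{d+1}\setminus H_p)$ if $w$ lies in some standard horoball $H_p$ and $\tilde\rho(w)=0$ otherwise, so that $\rho(x,t)=\tilde\rho(x_t)$ and $\rho(y,T)=\tilde\rho(y_T)$. Since a horoball is geodesically convex, $\tilde\rho(w)$ is just the hyperbolic distance from $w$ to the boundary $\partial H_p$, and the whole lemma reduces to estimating $\tilde\rho(x_t)$ and $\tilde\rho(y_T)$ against $\hm(x_t,y_T)$.

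First I would establish the spreading estimate $\hm(x_t,y_t)\le 1$. Working in the Poincar\'e ball model the radial geodesics give $x_t=\tanh(t/2)\,x$ and $y_t=\tanh(t/2)\,y$ (viewing $x,y$ as unit vectors), so the Euclidean segment joining them has length $\tanh(t/2)\|x-y\|\le 2e^{-t}\tanh(t/2)$ and stays inside the Euclidean ball of radius $\tanh(t/2)$. Bounding the hyperbolic length of this segment using $1-|z|^2\ge 1-\tanh^2(t/2)=\operatorname{sech}^2(t/2)$ collapses, after simplification, to $\hm(x_t,y_t)\le 1-e^{-2t}<1$. Combining this with $\hm(y_t,y_T)=T-t$ and the triangle inequality yields $\hm(x_t,y_T)\le T-t+1\le T-t+10$, which already produces the additive constant in the statement with room to spare.

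Next I would record two elementary properties of $\tilde\rho$. (i) If $w,w'$ do not lie in a common horoball, then $\tilde\rho(w)+\tilde\rho(w')\le\hm(w,w')$: if one of the points lies in no horoball this is immediate because the other point then lies in the complement of its own horoball, while if $w\in H_p$ and $w'\in H_q$ with $p\ne q$, the geodesic from $w$ to $w'$ meets $\partial H_p$ at a point $u_0$, which lies outside $H_q$ by pairwise disjointness, so $\tilde\rho(w)\le\hm(w,u_0)$ and $\tilde\rho(w')\le\hm(u_0,w')$ sum to $\hm(w,w')$. (ii) If $w,w'\in H_p$, then $|\tilde\rho(w)-\tilde\rho(w')|\le\hm(w,w')$, since each is the distance to the fixed set $\partial H_p$ and distance-to-a-set is $1$-Lipschitz. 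Applying (i) with $w=x_t$, $w'=y_T$ gives the first displayed inequality of the lemma and applying (ii) gives the second, in each case invoking $\hm(x_t,y_T)\le T-t+10$ from the previous step.

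The main obstacle is the geometric spreading estimate $\hm(x_t,y_t)\lesssim 1$: this is the only place the hypothesis $\|x-y\|\le 2e^{-t}$ is used and the only place the specific hyperbolic geometry enters, so I would take care to make the bound uniform over all $t>0$ (the clean closed form $1-e^{-2t}$ is reassuring in this respect). A secondary point needing attention is the exit-point argument in (i), which relies on horoballs being convex, so that a geodesic segment leaving $H_p$ crosses $\partial H_p$, and pairwise disjoint, so that the crossing point avoids $H_q$; both hold for standard horoballs.
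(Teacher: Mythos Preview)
Your proof is correct and follows essentially the same route as the paper: both reduce the two inequalities to the single estimate $\hm(x_t,y_T)\le (T-t)+\hm(x_t,y_t)$ together with a uniform bound on $\hm(x_t,y_t)$, and both handle the two cases via the exit-point / $1$-Lipschitz arguments you record as (i) and (ii). The only difference is cosmetic: the paper simply asserts $\hm(x_t,y_t)\le 10$ by appeal to standard hyperbolic geometry, whereas you compute explicitly along the Euclidean chord to get the sharper $\hm(x_t,y_t)\le 1-e^{-2t}<1$, which is a nice bonus but not needed for the application.
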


\begin{proof}
Suppose that $x_t$ and $y_T$ do not lie in a common standard horoball and assume without loss of generality that at least one of $x_t$, $y_T$ lies in the interior of some  horoball.   Therefore there must be a point on the geodesic joining $x_t$ and $y_T$ which lies on the boundary of this horoball.  It follows that
\[
\rho(x,t)+\rho(y,T)  \  \leq \    \hm(x_t, y_T) \  \leq \   \hm(y_t,y_T)  +\hm(x_t,y_t)  \ \leq \   (T-t)  +  10  
\]
as required. The  estimate $\hm(x_t,y_t)  \leq 10$ is not sharp and follows by basic hyperbolic geometry estimates using $\| x-y \| \leq 2e^{-t}$.  See, for example,  \cite[Proposition 4.3]{anderson}.   

Now suppose $x_t$ and $y_T$ do lie in a common standard horoball.  Then, combining ideas from the proof of Lemma \ref{escape} and the above argument, we get
\[
|\rho(x,t)-\rho(y,T) | \ \leq  \  \hm(x_t, y_T)  \  \leq \ \hm(y_t,y_T)  +\hm(x_t,y_t)  \ \leq \   (T-t)  +  10  
\]
completing the proof.
\end{proof}

\section{The regularity dimensions of $\ps$: proof of Theorem \ref{reg}} \label{regproof}

\subsection{The upper regularity dimension}

\subsubsection{Upper bound: $\urd \ps \leq \max\{\kmax, \,  2 \delta(\Gamma) - \kmin\}$}

 Let $z \in \ls$ and $T \geq t > 0$.  It follows from (\ref{global3}) that
\begin{eqnarray*}
\frac{\ps (B(z,e^{-t}))}{\ps (B(z,e^{-T}))} & \lesssim &  \frac{\exp(-t \delta(\Gamma)- \rho(z,t) (\delta(\Gamma) - k(z,t)))}{\exp(-T \delta(\Gamma)- \rho(z,T) (\delta(\Gamma) - k(z,T)))} \\ \\
& = &  \left(\frac{e^{-t}}{e^{-T}} \right)^{\delta(\Gamma)}\frac{ \exp( \rho(z,t) (k(z,t)-\delta(\Gamma) ))}{\exp(\rho(z,T) (  k(z,T)-\delta(\Gamma)))}  \qquad (\dagger)
\end{eqnarray*}
If $z_t$ and $z_T$ lie in the same standard horoball $H_p$, then, continuing from $(\dagger)$, we get
\begin{eqnarray*}
\frac{\ps (B(z,e^{-t}))}{\ps (B(z,e^{-T}))} &\lesssim &   \left(\frac{e^{-t}}{e^{-T}} \right)^{\delta(\Gamma)}\frac{ \exp( \rho(z,t) (k(p)-\delta(\Gamma) ))}{\exp(\rho(z,T) (  k(p)-\delta(\Gamma)))}  \\ \\
& = &  \left(\frac{e^{-t}}{e^{-T}} \right)^{\delta(\Gamma)} \exp \Big( (\rho(z,t)-\rho(z,T))(  k(p) - \delta(\Gamma)) \Big) \\ \\
& \leq &  \left(\frac{e^{-t}}{e^{-T}} \right)^{\delta(\Gamma)} \exp \Big( (T-t)  | k(p) - \delta(\Gamma)| \Big) \qquad \text{by Lemma \ref{escape}}  \\ \\
& =&  \left(\frac{e^{-t}}{e^{-T}} \right)^{\max\{ k(p), \, 2 \delta(\Gamma)- k(p)\}} \\ \\
& \leq& \left(\frac{e^{-t}}{e^{-T}} \right)^{\max\{ \kmax , \, 2 \delta(\Gamma)- \kmin\}}.
\end{eqnarray*}
Note that $\rho(z,T) \neq 0 \Rightarrow k(z,T) \geq \kmin$.  Therefore, if $z_t$ and $z_T$ do not lie in the same standard horoball, then, returning to $(\dagger)$, we get
\begin{eqnarray*}
\frac{\ps (B(z,e^{-t}))}{\ps (B(z,e^{-T}))}& \lesssim &  \left(\frac{e^{-t}}{e^{-T}} \right)^{\delta(\Gamma)}\frac{ \exp( \rho(z,t) (\kmax-\delta(\Gamma) ))}{\exp(\rho(z,T) (  \kmin-\delta(\Gamma)))}  \\ \\
& \leq &  \left(\frac{e^{-t}}{e^{-T}} \right)^{\delta(\Gamma)} \exp \Big( (\rho(z,t)+\rho(z,T)) \max\{\kmax-\delta(\Gamma) ,   \delta(\Gamma) - \kmin\} \Big) \\ \\
& \leq &  \left(\frac{e^{-t}}{e^{-T}} \right)^{\delta(\Gamma)} \exp \Big( (T-t) \max\{\kmax-\delta(\Gamma) , \,   \delta(\Gamma) - \kmin\} \Big) \\ \\
&\,& \hspace{80mm} \qquad \text{by Lemma \ref{escape}} \\ \\
&= & \left(\frac{e^{-t}}{e^{-T}} \right)^{ \max\{\kmax, \,  2 \delta(\Gamma) - \kmin\} } 
\end{eqnarray*}
It follows that $\urd \ps \leq \max\{\kmax, \,  2 \delta(\Gamma) - \kmin\}$, as required.

\subsubsection{Lower bound: $\urd \ps \geq  \max\{\kmax, \,  2 \delta(\Gamma) - \kmin\}$}

We first show that $\urd \ps \geq   2 \delta(\Gamma) - \kmin$ provided $\delta(\Gamma) \geq \kmin$ by considering parabolic fixed points.  Suppose  $\delta(\Gamma) \geq \kmin$,  choose $p \in \ls$ to be a parabolic fixed point of minimal rank $k(p) = \kmin$, and let $\varepsilon \in (0,1)$.  By Lemma \ref{parapoint} it follows that for $T>0$ sufficiently large we have
\begin{eqnarray*}
\frac{\ps (B(p,1))}{\ps (B(p,e^{-T}))} & \gtrsim &  \frac{1}{\exp(-\delta(\Gamma) T+\rho(p,T) ( \kmin-\delta(\Gamma)))} \\ \\
  & \geq &   \frac{1}{\exp(-\delta(\Gamma) T+(1-\varepsilon)T( \kmin-\delta(\Gamma)))} \\ \\
&=&  \left(\frac{1}{e^{-T}} \right)^{\delta(\Gamma) - (1-\varepsilon)(\kmin - \delta(\Gamma))} 
\end{eqnarray*}
which proves that $\urd \ps \geq 2\delta(\Gamma)-\kmin - \varepsilon( \delta(\Gamma)-\kmin )$ and letting $\varepsilon \to 0$ provides the desired lower bound.

Showing that $\urd \ps \geq  \kmax$ is more subtle since we cannot keep the point $z$ fixed.  This reflects the fact that this bound does not come from the local dimensions, see Proposition \ref{localdims}.  Suppose $\delta(\Gamma) \leq \kmax$, choose $p$ to be a parabolic fixed point of maximal rank $k(p) = \kmax$, and let $n \in \mathbb{Z}^+$ be a very large integer.  Let $p \neq z_0 \in \ls$,  $f$ be a parabolic element fixing $p$, and choose $z = f^n(z_0) \in \ls$.  Observe that $z\neq p$ and $z\to p$ as $n \to \infty$.  We assume $n$ is large enough to guarantee that the geodesic ray from $ \mathbf{0}$ to $z$ passes through $H_p$.  Choose $T>0$ to be the larger of the two values for which $z_T$ lies on the boundary of $H_p$ (i.e. $z_T$ is the `exit point' from $H_p$).  For simplicity, we now restrict our attention to the 2-dimensional hyperplane $H(p,z,z_T)$ containing the points $p, z, z_T$ and restricted to $\mathbb{D}^{d+1}$.   Let $u$ be the point on the boundary of $H_p \cap H(p,z,z_T)$ (which is a circle) which is at hyperbolic distance 1 from $z_T$ and lies further away from $p$ than $z_T$ (in Euclidean terms). Choose $t \in (0,T)$ such that $z_t \in H_p$ and such that the geodesic joining $z_t$ and $u$ is normal to the boundary of $H_p$.  This uniquely defines $t$, see Figure \ref{pic11}.
\begin{figure}[H]
  \centering
  \includegraphics[width= 0.65\textwidth]{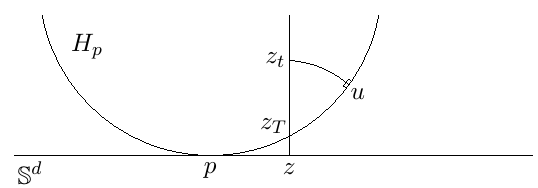}
\caption{Choosing $T$ and $t$. }
\label{pic11}
\end{figure}
\noindent It follows that
\[
\rho(z,t) = \hm (z_t,u) \geq \hm(z_t, z_T) - \hm(z_T, u) = T-t -1.
\]
 It follows from (\ref{global3}) and the fact that $\rho(z,T)=k(z,T)=0$ and $k(z,t) = \kmax$ by construction, that
\begin{eqnarray*}
\frac{\ps (B(z,e^{-t}))}{\ps (B(z,e^{-T}))} & \gtrsim &  \left(\frac{e^{-t}}{e^{-T}} \right)^{\delta(\Gamma)}\exp( \rho(z,t)  (\kmax - \delta(\Gamma) )) \\ \\
& \geq &  \left(\frac{e^{-t}}{e^{-T}} \right)^{\delta(\Gamma)}\exp(  (T-t -1)  (\kmax-\delta(\Gamma) )) \\ \\
&\gtrsim&  \left(\frac{e^{-t}}{e^{-T}} \right)^{\kmax}.
\end{eqnarray*}
Finally, basic hyperbolic geometry shows  that $T-t \to \infty$ as $n \to \infty$ and therefore $\urd \ps \geq \kmax$ as required. To see why $T-t \to \infty$, switch to the upper half-space model $\mathbb{H}^{2} \subseteq \mathbb{C}$ and assume that $p = \infty$.  Then the boundary of $H_p  \cap H(p,z,z_T)$ is simply a horizontal Euclidean line and the geodesic ray from $ \mathbf{0}$ (which is represented by $i \in \mathbb{H}^{2}$) to $z$ is an  arc of a circle which meets the boundary at right angles and (linearly) increases in radius  with $n$.  Observe that
\[
\log \frac{\text{Im}(z_t)}{\text{Im}(u)} = \hm (z_t,u) = \rho(z,t) \leq T-t
\]
and that $\text{Im}(u)$ is fixed but $\text{Im}(z_t)$ grows without bound in $n$, see Figure \ref{pic22}. 
\begin{figure}[H]
  \centering
  \includegraphics[width= 0.60\textwidth]{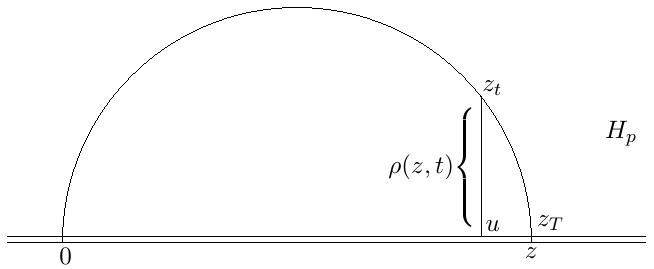}
\caption{An explanation of why $T-t \to \infty$.  For large $n$ the boundary of $H_p$ appears very close to the boundary of $\mathbb{H}^{2}$. }
\label{pic22}
\end{figure}

\subsection{The lower regularity dimension}

The calculation of  the lower regularity dimension is similar to the upper regularity dimension and so we only sketch the proof, leaving the details to the reader.  The lower bound closely follows the upper bound in the upper regularity case and the upper bound closely follows the lower bound in the upper regularity case.  The global measure formula (\ref{global3}) is again the key tool and the roles of $\kmin$ and $\kmax$ are reversed.

\subsubsection{Lower bound: $\lrd \ps \geq \min\{\kmin, \,  2 \delta(\Gamma) - \kmax\}$}

 Let $z \in \ls$ and $T \geq t > 0$.  It follows from (\ref{global3}) that if $z_t$ and $z_T$ lie in the same standard horoball $H_p$, then
\begin{eqnarray*}
\frac{\ps (B(z,e^{-t}))}{\ps (B(z,e^{-T}))} &\gtrsim  &   \left(\frac{e^{-t}}{e^{-T}} \right)^{\delta(\Gamma)} \exp \Big( (\rho(z,t)-\rho(z,T))(  k(p) - \delta(\Gamma)) \Big) \\ \\
& \geq &  \left(\frac{e^{-t}}{e^{-T}} \right)^{\delta(\Gamma)} \exp \Big( -(T-t)  | k(p) - \delta(\Gamma)| \Big) \qquad \text{by Lemma \ref{escape}}  \\ \\
& \geq&  \left(\frac{e^{-t}}{e^{-T}} \right)^{\min\{ \kmin , \, 2 \delta(\Gamma)- \kmax\}}.
\end{eqnarray*}
If $z_t$ and $z_T$ do not lie in the same standard horoball, then
\begin{eqnarray*}
\frac{\ps (B(z,e^{-t}))}{\ps (B(z,e^{-T}))}& \gtrsim  & \left(\frac{e^{-t}}{e^{-T}} \right)^{\delta(\Gamma)} \exp \Big( (\rho(z,t)+\rho(z,T)) \min\{\kmin-\delta(\Gamma) ,  \,  \delta(\Gamma) - \kmax\} \Big) \\ \\
& \geq &  \left(\frac{e^{-t}}{e^{-T}} \right)^{\delta(\Gamma)} \exp \Big((T-t) \min\{\kmin-\delta(\Gamma) ,   \delta(\Gamma) - \kmax\} \Big) \\ \\
 &\,& \hspace{10mm} \text{by Lemma \ref{escape} and since $\min\{\kmin-\delta(\Gamma) ,  \,  \delta(\Gamma) - \kmax\}  \leq 0$ } \\ \\
&\geq & \left(\frac{e^{-t}}{e^{-T}} \right)^{ \min\{\kmin, \,  2 \delta(\Gamma) - \kmax\} } .
\end{eqnarray*}
It follows that $\lrd \ps \geq \min\{\kmin, \,  2 \delta(\Gamma) - \kmax\}$, as required.

\subsubsection{Upper bound: $\lrd \ps \leq  \min\{\kmin, \,  2 \delta(\Gamma) - \kmax\}$}

 Suppose  $\delta(\Gamma) \leq \kmax$,  choose $p$ to be a parabolic fixed point of maximal  rank $k(p) = \kmax$, and let $\varepsilon \in (0,1)$.  By Lemma \ref{parapoint} it follows that for $T>0$ sufficiently large we have
\begin{eqnarray*}
\frac{\ps (B(p,1))}{\ps (B(p,e^{-T}))} & \lesssim &  \frac{1}{\exp(-\delta(\Gamma) T+\rho(p,T) ( \kmax-\delta(\Gamma)))} \\ \\
  & \leq &   \frac{1}{\exp(-\delta(\Gamma) T+(1-\varepsilon)T( \kmax-\delta(\Gamma)))} \\ \\
&=&  \left(\frac{1}{e^{-T}} \right)^{\delta(\Gamma) - (1-\varepsilon)(\kmax - \delta(\Gamma))} 
\end{eqnarray*}
which proves that $\lrd \ps \leq  2\delta(\Gamma)-\kmax + \varepsilon( \kmax -\delta(\Gamma) )$ and letting $\varepsilon \to 0$ provides the desired upper bound.

Analogous to the upper regularity dimension, showing that $\lrd \ps \leq  \kmin$ is more subtle since we cannot keep the point $z$ fixed.  Suppose $\delta(\Gamma) \geq \kmin$ and choose $p$ to be a parabolic fixed point of minimal rank $k(p) = \kmin$ and let $n \in \mathbb{Z}^+$ be a very large integer.  Let $p \neq z_0 \in \ls$,  $f$ be a parabolic element fixing $p$, and choose $z = f^n(z_0) \in \ls$. As above, choose $T>0$ such that  $z_T$ is the `exit point' from $H_p$ and choose $t \in (0,T)$ such that $z_t \in H_p$ and $\rho(z,t) \geq  T-t -1$.   It follows from (\ref{global3}) and the fact that $\rho(z,T)=k(z,T)=0$ and $k(z,t) = \kmin$ by construction, that
\begin{eqnarray*}
\frac{\ps (B(z,e^{-t}))}{\ps (B(z,e^{-T}))} & \lesssim &  \left(\frac{e^{-t}}{e^{-T}} \right)^{\delta(\Gamma)}\exp( \rho(z,t)  (\kmin- \delta(\Gamma) )) \\ \\
& \leq &  \left(\frac{e^{-t}}{e^{-T}} \right)^{\delta(\Gamma)}\exp(  (T-t -1)  (\kmin-\delta(\Gamma) )) \\ \\
&\lesssim&  \left(\frac{e^{-t}}{e^{-T}} \right)^{\kmin}.
\end{eqnarray*}
Observe, as before, that $T-t \to \infty$ as $n \to \infty$ and therefore $\lrd \ps \leq  \kmin$ as required.

\section{The dimensions of $\ls$: proof of Theorem \ref{ass}} \label{assproof}

\subsection{The Assouad dimension}

\subsubsection{Lower bound: $\ad \ls \geq   \max\{\kmax, \, \delta(\Gamma)\}$}

 Since $\ad \ls \geq \hd \ls = \delta(\Gamma)$ it suffices to prove that $\ad \ls \geq \kmax$.  Let $p \in \ls$ be a parabolic fixed point of maximal rank and  choose parabolic elements $f_1, \dots, f_{\kmax}$ fixing $p$ which  are a minimal generating set for a free Abelian group $F_{\max} \leq \Gamma$ lying in the stabliser of $p$.  Switch  to the upper half-space model $\mathbb{H}^{d+1}$ and assume that $p=\infty$, which we may do by conjugation which does not alter any dimensions.  Therefore $f_i$ acts on the boundary $\mathbb{R}^d$ by $f_i(z) = z+ t_i$ for some translation $t_i \in \mathbb{R}^d \setminus\{0\}$.  Observe that the $t_i$ must be a linearly independent set or the $f_i$ cannot be a minimal generating set for $F_{\max}$.  Let $z \in L(\Gamma) \setminus \{\infty\}$ which we know exists since $\Gamma$ is non-elementary.  By $\Gamma$-invariance of $\ls$ we have
\[
\ls \supset \Gamma(z) \supset F_{\max}(z) =  \left\{ z+\sum_{i=1}^{\kmax} n_i t_i \ : \ (n_1, \dots, n_{\kmax}) \in \mathbb{Z}^{\kmax} \right\}.
\]
Let $\alpha: \mathbb{R}^d \to \mathbb{R}^{\kmax}$ be an affine map which first sends $\{t_1, \dots, t_{\kmax} \}$ to the standard basis in $\mathbb{R}^{\kmax}$ and then translates the image of $z$ to the origin.  This is a bi-Lipschitz map and so again does not alter any dimensions.  It follow that
\[
\ad \ls \geq \ad \alpha(F_{\max}(z)) = \ad \left(\mathbb{Z}^{\kmax} \right)= \kmax.
\]
To see why $\ad \left(\mathbb{Z}^{\kmax}\right) = \kmax$ consider balls $B(0, R)$ with $R$ tending to $\infty$ and choose $r=1/2$.  Then 
\[
N_r \left(B(0,R) \cap \mathbb{Z}^{\kmax} \right)  = \# B(0,R) \cap \mathbb{Z}^{\kmax} \gtrsim R^{\kmax} \gtrsim  (R/r)^{\kmax}
\]
which proves $\ad \left(\mathbb{Z}^{\kmax} \right)\geq  \kmax$.  The other direction is trivial.

\subsubsection{Squeezing and counting horoballs}

In this section we provide some auxiliary lemmas involving horoballs.  Given a horoball $H_p$ and $\theta \in (0,1]$, we write $\theta H_p \subseteq H_p$ to denote the \emph{squeezed horoball}, which still has base point $p$ but is scaled by a factor of $\theta$.  We write $|H_p|$ to denote the Euclidean diameter of $H_p$, and thus $|\theta H_p| = \theta |H_p|$.  We also write $\Pi: \mathbb{D}^{d+1} \setminus \{\mathbf{0}\} \to \mathbb{S}^d$ to be the projection defined by choosing $\Pi(z) \in \mathbb{S}^d$ such that $\mathbf{0}, z, \Pi(z)$ are collinear with $z$ lying in-between $0$ and $\Pi(z)$.  Thus $\Pi(A)$ is the `shadow at infinity' of a set $A \subseteq \mathbb{D}^{d+1} \setminus \{ \mathbf{0}\}$.  Note that $\Pi(H_p)$ is a ball with Euclidean radius $\approx |H_p|$.   The following is a well-known result of Stratmann and Velani \cite[Corollary 3.5]{stratmannvelani}.

\begin{lma}[Corollary 3.5, \cite{stratmannvelani}] \label{squeeze}
Let $H_p$ be a standard horoball with base point $p \in P$ and $\theta \in (0,1]$ be a `squeezing factor'.  Then
\[
 \ps (\Pi(\theta H_p)) \approx  \theta^{2 \delta(\Gamma) - k(p)} |H_p|^{\delta(\Gamma)}.
\]
\end{lma}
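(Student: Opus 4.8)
The plan is to use the global measure formula~(\ref{global3}) directly, exploiting the fact that the shadow $\Pi(\theta H_p)$ is (up to uniform constants) a single Euclidean ball centred at $p$ whose radius we can read off from the squeezing factor $\theta$. The key observation is that $\rho(p,t)$ along the geodesic ray towards $p$ is essentially linear in $t$ once we have entered the horoball (this is the content of Lemma~\ref{parapoint}), so the exponential terms in the global measure formula become explicitly computable powers of $\theta$ and $|H_p|$.

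First I would fix the geometric picture. Let $s>0$ be the entry time for the ray from $\mathbf{0}$ to $p$ into $H_p$, so that $|H_p| \approx e^{-s}$ and $\rho(p,s)=0$. The squeezed horoball $\theta H_p$ is tangent to $\mathbb{S}^d$ at $p$ with Euclidean diameter $\theta|H_p|$, so its shadow $\Pi(\theta H_p)$ is a ball centred at $p$ of radius $\approx \theta |H_p| \approx \theta e^{-s}$. Writing this radius as $e^{-T}$, we have $T \approx s + \log(1/\theta)$, i.e. $T - s \approx \log(1/\theta)$. The point $p_T$ at hyperbolic distance $T$ from $\mathbf{0}$ lies deep inside $H_p$, and by the normality of the geodesic to the horoball boundary (exactly as in Lemma~\ref{parapoint}) its escape distance satisfies $\rho(p,T) = T - s \approx \log(1/\theta)$, with $k(p,T)=k(p)$.

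Next I would simply substitute into~(\ref{global3}). Since $\ps(\Pi(\theta H_p)) \approx \ps(B(p,e^{-T}))$, the global measure formula gives
\[
\ps(\Pi(\theta H_p)) \ \approx \ \exp\big(-T\delta(\Gamma) - \rho(p,T)(\delta(\Gamma)-k(p))\big).
\]
Inserting $T \approx s + \log(1/\theta)$ and $\rho(p,T)\approx \log(1/\theta)$ and collecting terms, the $\delta(\Gamma)\log(1/\theta)$ contributions combine with the $(\delta(\Gamma)-k(p))\log(1/\theta)$ term to produce a total exponent of $2\delta(\Gamma)-k(p)$ on $\theta$, while the $e^{-s\delta(\Gamma)}$ factor contributes $|H_p|^{\delta(\Gamma)}$. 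This yields precisely $\theta^{2\delta(\Gamma)-k(p)}|H_p|^{\delta(\Gamma)}$, as claimed.

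The main obstacle I anticipate is not the algebra but controlling the implied constants carefully in the identification $\ps(\Pi(\theta H_p)) \approx \ps(B(p,e^{-T}))$ and in the estimate $\rho(p,T) = T-s + O(1)$: one must check that the comparability $|\Pi(\theta H_p)| \approx \theta|H_p|$ and the linearity of $\rho$ hold with constants independent of $\theta$ and of which parabolic point $p$ is chosen. Since this is stated as a known result of Stratmann and Velani, I would in practice simply cite their Corollary~3.5; but the sketch above shows it follows cleanly from the global measure formula already available to us, with all the geometry encapsulated in the quick escape and parabolic centre lemmas.
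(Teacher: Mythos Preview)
The paper does not prove this lemma at all: it simply records it as \cite[Corollary~3.5]{stratmannvelani} and moves on. Your sketch therefore supplies strictly more than the paper does, and it is correct. The derivation you outline is in fact essentially how Stratmann and Velani obtain the result, and it follows cleanly from the global measure formula (\ref{global3}) together with the exact identity $\rho(p,T)=T-s$ from the proof of Lemma~\ref{parapoint} (not merely the asymptotic $\rho(p,t)\sim t$). One small point of care: your ``$T\approx s+\log(1/\theta)$'' should be read as an additive $O(1)$ statement rather than the paper's multiplicative $\approx$; once that is understood, the exponentials combine exactly as you describe. The uniformity in $p$ that you flag as a potential obstacle is genuine but harmless: in the ball model one has $e^{-s}=|H_p|/(2-|H_p|)$, so $e^{-s}\approx |H_p|$ with absolute constants, and the doubling of $\ps$ (which follows from the finiteness of $\urd\ps$ established in Theorem~\ref{reg}) handles the comparison $\ps(\Pi(\theta H_p))\approx\ps(B(p,e^{-T}))$.
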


We will also need to be able to count horoballs.  This is a standard technique in the study of Kleinian groups, see, for example, \cite{stratmannvelani}.  The following should be interpreted as a  partial localisation of  \cite[Theorem 1 and 3]{stratmannvelani}.

\begin{lma} \label{countinghoroballs}
Let $z \in \ls$ and $T> t>0$.  For $t$ sufficiently large we have
\[
 \sum_{\substack{p \in P \cap B(z,e^{-t}): \\ e^{-t}>|H_p| \geq e^{-T} }}    |H_p|^{\delta(\Gamma)} \ \lesssim \ (T-t) \ \ps(B(z,e^{-t})).
\]
\end{lma}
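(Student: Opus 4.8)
The plan is to replace each summand $|H_p|^{\delta(\Gamma)}$ by the Patterson--Sullivan measure of the associated shadow and then exploit a bounded-overlap property scale by scale. By Lemma \ref{squeeze} applied with squeezing factor $\theta=1$ we have $|H_p|^{\delta(\Gamma)} \approx \ps(\Pi(H_p))$ for every $p \in P$, so it suffices to bound $\sum_p \ps(\Pi(H_p))$, the sum being over the base points in the statement. Since $\Pi(H_p)$ is a ball of Euclidean radius $\approx |H_p| < e^{-t}$ roughly centred at $p \in B(z,e^{-t})$, there is a universal constant $C \geq 1$ with $\Pi(H_p) \subseteq B(z,Ce^{-t})$ for each such $p$; moreover $\ps(B(z,Ce^{-t})) \approx \ps(B(z,e^{-t}))$ because $\urd \ps < \infty$ by Theorem \ref{reg}, i.e.\ $\ps$ is doubling. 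Thus the problem reduces to controlling how much the shadows $\Pi(H_p)$ overlap inside $B(z,Ce^{-t})$.

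To do this I would decompose the admissible base points according to the Euclidean size of their horoballs. For each integer $j$ with $t \leq j < T$, let $P_j$ denote the set of $p$ in the sum with $e^{-(j+1)} \leq |H_p| < e^{-j}$; the number of such scales is at most $T-t+1 \lesssim (T-t)$, where we may assume $T-t \geq 1$ since otherwise the sum ranges over only $O(1)$ horoballs of size $\approx e^{-t}$ and the estimate is immediate. The key geometric input is that the base points within a single scale are uniformly separated: as the standard horoballs are pairwise disjoint, two horoballs with base points $p,q$ and diameters $D_p,D_q$ satisfy $\|p-q\| \gtrsim \sqrt{D_p D_q}$ (a direct computation with mutually tangent Euclidean balls in the upper half-space model), so for $p,q \in P_j$ we get $\|p-q\| \gtrsim e^{-j}$. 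Since each shadow $\Pi(H_p)$ with $p \in P_j$ is a ball of radius $\approx e^{-j}$, this separation of centres forces the family $\{\Pi(H_p)\}_{p \in P_j}$ to have bounded overlap, with multiplicity $M$ depending only on $d$ and the implied constants.

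Bounded overlap at a fixed scale then yields, by Tonelli,
\[
\sum_{p \in P_j} \ps(\Pi(H_p)) \ = \ \int \sum_{p \in P_j} \mathbf{1}_{\Pi(H_p)} \, d\ps \ \leq \ M\, \ps\Big( \bigcup_{p \in P_j} \Pi(H_p)\Big) \ \leq \ M\, \ps(B(z,Ce^{-t})) \ \lesssim \ \ps(B(z,e^{-t})),
\]
uniformly in $j$. Summing over the $\lesssim (T-t)$ scales gives $\sum_p \ps(\Pi(H_p)) \lesssim (T-t)\,\ps(B(z,e^{-t}))$, and combining with $|H_p|^{\delta(\Gamma)} \approx \ps(\Pi(H_p))$ completes the argument.

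The main obstacle is the separation estimate $\|p-q\| \gtrsim \sqrt{D_p D_q}$, which is precisely what converts disjointness of the horoballs in $\mathbb{D}^{d+1}$ into bounded overlap of their shadows on $\mathbb{S}^d$; everything else (passing to shadow measures via Lemma \ref{squeeze}, the doubling of $\ps$, and the summation over the $\approx T-t$ scales) is routine once this is in place. The hypothesis that $t$ is sufficiently large is used only to ensure that Lemma \ref{squeeze} and the shadow geometry apply uniformly to all horoballs occurring in the sum.
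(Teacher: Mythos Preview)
Your argument is correct and takes a genuinely different, more elementary route than the paper. Both proofs decompose into $\approx T-t$ dyadic scales and bound each scale's contribution by $\ps(B(z,e^{-t}))$, but the single-scale bound is obtained differently. The paper invokes the Dirichlet-type covering theorem \cite[Theorem~1]{stratmannvelani} (a cover of $\ls$ by squeezed-horoball shadows with bounded multiplicity) to derive the counting estimate $\#\{p \in P \cap B(z,e^{-t}) : e^{-t} > |H_p| \geq e^{-s}\} \lesssim e^{s\delta(\Gamma)}\ps(B(z,2e^{-t}))$, and then sums over scales. You instead use only the pairwise disjointness of the standard horoballs to obtain the separation $\|p-q\| \gtrsim \sqrt{|H_p|\,|H_q|}$, which at a fixed scale $j$ forces the full shadows $\{\Pi(H_p)\}_{p \in P_j}$ to have bounded overlap; combined with $|H_p|^{\delta(\Gamma)} \approx \ps(\Pi(H_p))$ from Lemma~\ref{squeeze} at $\theta=1$, this yields the single-scale bound directly. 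Your route is more self-contained, trading the external Dirichlet theorem for an easy Euclidean computation; the paper's route produces the intermediate counting bound \eqref{sumwith1} as a byproduct, though it is not used elsewhere. One minor caveat: your reduction to $T-t \geq 1$ is not quite as immediate as you claim, since for very small $T-t$ the factor $(T-t)$ on the right can be made arbitrarily small while the sum on the left need not be empty---but the paper's own proof has exactly the same feature (the number of integers in $[t,T+1]$ is only $\lesssim T-t$ when $T-t \gtrsim 1$), and it is harmless because the lemma is only ever applied with $T-t \geq \log 10$.
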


\begin{proof}
 It follows from the well-known `Dirichlet type Theorem' for Kleinian groups, see \cite[Theorem 1]{stratmannvelani}, that there is a constant $\kappa>0$ depending only on $\Gamma$ such that for sufficiently large $s>0$ we have
\[
\ls \subseteq \bigcup_{\substack{p \in P : \\ |H_p| \geq e^{-s} }}    \Pi\left(  \kappa \sqrt{\frac{e^{-s}}{|H_p|}} H_p \right)
\]
with multiplicity $\lesssim 1$.  In particular, for all $s>t>0$ with $t$ sufficiently large, the union 
\[
 \bigcup_{\substack{p \in P \cap B(z,e^{-t}) : \\ e^{-t}>|H_p| \geq e^{-s} }}    \Pi\left(  \kappa \sqrt{\frac{e^{-s}}{|H_p|}} H_p \right)
\]
has multiplicity $\lesssim 1$ and is contained in $B(z,2e^{-t})$.  Therefore, applying Lemma \ref{squeeze}, we have
\begin{eqnarray}
\ps(B(z,2e^{-t})) & \gtrsim  & \sum_{\substack{p \in P \cap B(z,e^{-t}): \\ e^{-t}>|H_p| \geq e^{-s} }}   \ps \left(\Pi\left(  \kappa \sqrt{\frac{e^{-s}}{|H_p|}} H_p \right) \right) \nonumber \\ \nonumber \\
& \gtrsim  & \sum_{\substack{p \in P \cap B(z,e^{-t}): \\ e^{-t}>|H_p| \geq e^{-s} }}   \left( \kappa \sqrt{\frac{e^{-s}}{|H_p|}} \right)^{2 \delta(\Gamma) - k(p)}| H_p  |^{\delta(\Gamma)}  \nonumber \\  \nonumber \\
& \gtrsim  &  e^{-s \delta(\Gamma)} \sum_{\substack{p \in P \cap B(z,e^{-t}): \\ e^{-t}>|H_p| \geq e^{-s} }}    \left(\frac{|H_p|}{e^{-s}} \right)^{k(p)/2} \nonumber \\ \nonumber \\
& \geq  &  e^{-s \delta(\Gamma)} \sum_{\substack{p \in P \cap B(z,e^{-t}): \\ e^{-t}>|H_p| \geq e^{-s} }}    1. \label{sumwith1}
\end{eqnarray}
Therefore
\begin{eqnarray*}
\sum_{\substack{p \in P \cap B(z,e^{-t}): \\ e^{-t}> |H_p| \geq e^{-T} }}    |H_p|^{\delta(\Gamma)}& \leq & \sum_{m \in \mathbb{Z} \cap [t,T+1]} \sum_{\substack{p \in P \cap B(z,e^{-t}): \\ e^{-(m-1)} >|H_p| \geq e^{-m} }}    |H_p|^{\delta(\Gamma)} \\ \\
& \lesssim & \sum_{m \in \mathbb{Z} \cap [t,T+1]} \sum_{\substack{p \in P \cap B(z,e^{-t}): \\ e^{-(m-1)} >|H_p| \geq e^{-m} }}   e^{-m\delta(\Gamma)} \\ \\
& \lesssim & \sum_{m \in \mathbb{Z} \cap [t,T+1]} e^{-m\delta(\Gamma)} \sum_{\substack{p \in P \cap B(z,e^{-t}): \\ e^{-t} >|H_p| \geq e^{-m} }}  1 \\ \\
& \lesssim & \sum_{m \in \mathbb{Z} \cap [t,T+1]} e^{-m\delta(\Gamma)} \left(e^{m \delta(\Gamma)} \ps(B(z,2e^{-t})) \right)  \qquad \text{by (\ref{sumwith1})}\\ \\
&\lesssim &(T-t) \ \ps(B(z,e^{-t}))
\end{eqnarray*}
since $\ps$ is doubling, which completes the proof.
\end{proof}

\subsubsection{Upper bound: $\ad \ls \leq  \max\{\kmax, \, \delta(\Gamma)\}$}

Recall that $\ad \ls \leq \urd \ps = \max\{ \kmax , \, 2 \delta(\Gamma) - \kmin\}$ and therefore if $\delta(\Gamma) \leq (\kmin+\kmax)/2$, then the desired upper bound $\ad \ls \leq \kmax$ follows immediately.  From now on we assume $\delta(\Gamma) > (\kmin+\kmax)/2$, although the proof we give actually works without change in the larger range $\delta(\Gamma) \geq \kmin$.  The broad strategy of our argument takes inspiration from the paper of Stratmann and Urba\'nski \cite{stratmannurbanski}.

Let $z \in \ls$, $\epsilon>0$ and $T>t>0$ with $T-t \geq \max\{\eps^{-1}, \log 10\}$.  Let $\{x_i\}_{i \in X}$ be a centred $e^{-T}$-packing of $B(z,e^{-t}) \cap \ls$ of maximal cardinality, that is, $x_i \in B(z,e^{-t}) \cap \ls$ for all $i \in X$ and $\|x_i - x_j \| > 2 e^{-T}$ for all $i \neq j$.  Decompose  $X$ as the union 
\[
X \ = X_0 \, \cup \, X_1 \, \cup \, \bigcup_{n=2}^\infty X_n
\]
where
\[
X_0 = \left\{ i \in X \ : \ (x_i)_T \in H_p \text{ with } |H_p| \geq 10 e^{-t}   \right\},
\]
\[
X_1 = \left\{ i \in X\setminus X_0 \ : \ \rho(x_i, T)  \leq \epsilon (T-t) \right\}  
\]
and
\[
X_n = \left\{ i \in X\setminus (X_0 \cup X_1) \ : \ n-1 <  \rho(x_i, T)  \leq  n \right\}.
\]
Note that this is indeed a decomposition because if $i \notin X_1$, then $\rho(x_i, T) > \epsilon (T-t) \geq 1$ (by assumption) and so $i \in X_n$ for some $n \geq 2$.

We will estimate the cardinalities  of $X_0$, $X_1$ and $X_n$ ($n \geq 2$) separately, beginning with $X_0$.   An elementary  Euclidean volume argument shows that there is at most one $p \in P$ such that $|H_p| \geq 10 e^{-t}$ and $H_p \cap \left(\cup_{i \in X} (x_i)_T \right) \neq \emptyset$.  (10 is clearly not the optimal constant here, but there is no need to optimise it.) Suppose $|X_0| \neq 0$.  It follows that we can fix  $p \in P$ with $|H_p| \geq 10 e^{-t}$ such that $(x_i)_T \in H_p$ for all $i \in X_0$.  Moreover, this forces  $z_t \in H_p$. If $\delta(\Gamma) \leq k(p)$, then by (\ref{global3})
\begin{eqnarray*}
 (e^{-t})^{\delta(\Gamma)} \exp(-\rho(z,t)(\delta(\Gamma)-k(p))) & \gtrsim & \ps(B(z,e^{-t})) \\ \\
& \gtrsim & \ps\left( \cup_{i \in X_0} B(x_i,e^{-T})\right) \\ \\
&\geq & |X_0|   \min_{i \in X_0} (e^{-T})^{\delta(\Gamma)} \exp(-\rho(x_i,T) (\delta(\Gamma)-k(p))) 
\end{eqnarray*}
since the balls $\{B(x_i,e^{-T})\}_{i \in X_0}$ are pairwise disjoint.  Therefore
\begin{eqnarray*}
|X_0| &\lesssim &  \left( \frac{e^{-t}}{e^{-T}} \right)^{\delta(\Gamma)} \max_{i \in X_0} \exp(( \rho(z,t)-\rho(x_i,T)(k(p)-\delta(\Gamma))) \\ \\
&\lesssim &  \left( \frac{e^{-t}}{e^{-T}} \right)^{\delta(\Gamma)}  \exp(( T-t)(k(p)-\delta(\Gamma))) \qquad \text{by Lemma \ref{escape2}}\\ \\
&= &  \left( \frac{e^{-t}}{e^{-T}} \right)^{k(p)} .
\end{eqnarray*}
If $\delta(\Gamma) > k(p)$, then we have to work a little harder.  In this case, decompose $X_0$  as
\[
X_0 \ = X_0^0 \, \cup \,  \bigcup_{n=1}^\infty X_0^n
\]
where
\[
X_0^0 = \left\{ i \in X_0 \ : \ \rho(x_i,T) \leq \rho(z,t)  \right\},
\]
and
\[
X_0^n = \left\{ i \in X_0 \ : \ \rho(z,t) +n-1< \rho(x_i,T)  \leq  \rho(z,t) +n \right\} .
\]
Applying (\ref{global3}) we have
\begin{eqnarray*}
 (e^{-t})^{\delta(\Gamma)} \exp(-\rho(z,t)(\delta(\Gamma)-k(p))) & \gtrsim & \ps(B(z,e^{-t})) \\ \\
& \gtrsim & \ps\left( \cup_{i \in X_0^0} B(x_i,e^{-T})\right) \\ \\
&\gtrsim& |X_0^0|    (e^{-T})^{\delta(\Gamma)} \exp(-\rho(z,t) (\delta(\Gamma)-k(p))) 
\end{eqnarray*}
and therefore
\[
|X_0^0| \lesssim  \left( \frac{e^{-t}}{e^{-T}} \right)^{\delta(\Gamma)} .
\]
If $i \in X_0^n$ for some $n \geq 1$, then the ball $B(x_i,e^{-T})$ is contained in the shadow at infinity of the squeezed horoball
\[
2e^{-(\rho(z,t)+n-1)} H_p
\]
and therefore
\begin{eqnarray*}
\ps \left( \bigcup_{i \in X_0^n} B(x_i, e^{-T}) \right) &\leq& \ps   \left( \Pi(2e^{-(\rho(z,t)+n-1)} H_p)  \right) \\ \\
 &\lesssim & e^{-(\rho(z,t)+n)(2\delta(\Gamma)-k(p))} |H_p|^{\delta(\Gamma)} \qquad \text{by Lemma \ref{squeeze}.}
\end{eqnarray*}
In the other direction, using the fact that $\{x_i\}_{i \in X_0^n}$ is an $e^{-T}$-packing,
\begin{eqnarray*}
\ps \left( \bigcup_{i \in X_0^n} B(x_i, e^{-T}) \right) &\geq & \sum_{i \in X_0^n }   \ps( B(x_i, e^{-T})) \\ \\
 &\gtrsim  &  |X_0^n|    (e^{-T})^{\delta(\Gamma)} \exp(-(\rho(z,t)+n) (\delta(\Gamma)-k(p))) 
\end{eqnarray*}
where the last line comes from (\ref{global3}) and the definition of $X_0^n$. Therefore
\[
|X_0^n| \ \lesssim \ \left(\frac{e^{-\rho(z,t)}|H_p|}{e^{-T}}\right)^{\delta(\Gamma)} e^{-n \delta(\Gamma)} \ \lesssim \ \left(\frac{e^{-t}}{e^{-T}}\right)^{\delta(\Gamma)} e^{-n \delta(\Gamma)}
\]
where the last inequality uses the  estimate
\begin{equation} \label{goodsize}
e^{-\rho(z,t)}|H_p| \approx e^{-t}.
\end{equation}
It is always true that $e^{-\rho(z,t)}|H_p| \geq  e^{-t}$ since $z_t$ is on the boundary of $e^{-\rho(z,t)}H_p$, but the reverse may not be true in general.  However, if $e^{-\rho(z,t)}|H_p| > 10 e^{-t}$, say, then the squeezed horoball $e^{-\rho(z,t)}H_p$ cannot contain any of the points $\{(x_i)_T\}_{i \in X_0}$, see Figure \ref{pic33}, and therefore $\rho(x_i,T) \leq \rho(z,t)$ for all $i \in X_0$ which renders $X_0^n$ empty for all $n \geq 1$.
\begin{figure}[H]
  \centering
  \includegraphics[width= 0.8\textwidth]{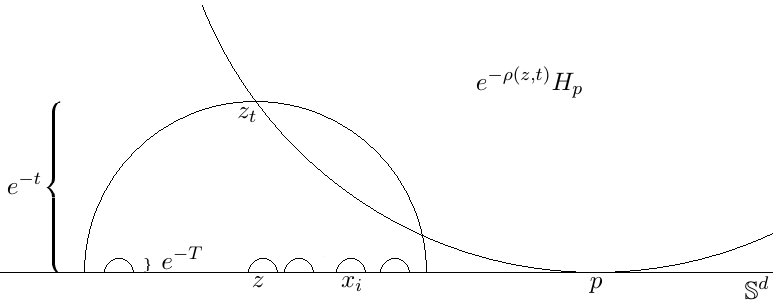}
\caption{If $e^{-\rho(z,t)}|H_p| $ is much bigger than $e^{-t}$, then it cannot intersect $\{(x_i)_T\}_{i \in X_0}$.}
\label{pic33}
\end{figure}
 We have
\begin{eqnarray*}
|X_0| & = &  |X_0^0| +  \sum_{n=1}^{\infty} |X_0^n|  \\ \\
& \lesssim &  \left( \frac{e^{-t}}{e^{-T}} \right)^{\delta(\Gamma)}  + \sum_{n=1}^{\infty} \left(\frac{e^{-t}}{e^{-T}}\right)^{\delta(\Gamma)} e^{-n \delta(\Gamma)} \\ \\
&\lesssim & \left( \frac{e^{-t}}{e^{-T}} \right)^{\delta(\Gamma)}.
\end{eqnarray*}
Therefore, irrespective of the relationship between $k(p)$ and $\delta(\Gamma)$, we have the estimate
\begin{equation} \label{X0}
|X_0| \ \lesssim  \ \left( \frac{e^{-t}}{e^{-T}} \right)^{\max\{k(p), \,  \delta(\Gamma)\}} \  \leq  \ \left( \frac{e^{-t}}{e^{-T}} \right)^{\max\{\kmax, \,  \delta(\Gamma)\}}.
\end{equation}
If $z_t \in H_p$ with $|H_p| \leq 10 e^{-t}$, then
\[
\rho(z,t) \leq \hm(z_t, z_{t-\log 10}) = \log 10 \leq T-t
\]
(by assumption) and if $z_t \in H_p$ with $|H_p| > 10 e^{-t}$ then either  $X \setminus X_0=\emptyset$ or there must be some $i \in X \setminus X_0$ such that $x_i$ is not in $H_p$.  Then we can apply Lemma \ref{escape2} to obtain
\begin{equation} \label{rhozt}
\rho(z,t) \leq  T-t + 10.
\end{equation}
Therefore we may assume the estimate (\ref{rhozt}) when estimating the size of $X \setminus X_0$.

Turning our attention to $X_1$,  using (\ref{global3}), we have
\begin{eqnarray*}
(e^{-t})^{\delta(\Gamma)} \exp(-\rho(z,t)(\delta(\Gamma)-k(z,t))) & \gtrsim & \ps(B(z,e^{-t}))  \\ \\
& \gtrsim & \ps\left( \cup_{i \in X_1} B(x_i,e^{-T})\right) \\ \\
& \gtrsim & \sum_{i \in X_1}  (e^{-T})^{\delta(\Gamma)} \exp(-\rho(x_i,T) (\delta(\Gamma)-k(x_i,T))) \\ \\
& \geq & |X_1|   (e^{-T})^{\delta(\Gamma)} \exp(-\eps(T-t) (\delta(\Gamma)-\kmin)) 
\end{eqnarray*}
where the last estimate uses the definition of $X_1$ and our assumption that $\delta(\Gamma) \geq \kmin$.  Therefore, applying (\ref{rhozt}),
\begin{eqnarray}
 |X_1|  & \lesssim &   \left( \frac{e^{-t}}{e^{-T}} \right)^{\delta(\Gamma)} e^{\eps(T-t)(\delta(\Gamma)-\kmin) } e^{(T-t) \max\{k(z,t)-\delta(\Gamma), \, 0\}}  \nonumber \\  \nonumber\\
& = & \left( \frac{e^{-t}}{e^{-T}} \right)^{\max\{ k(z,t) , \, \delta(\Gamma)\}+\eps(\delta(\Gamma) - \kmin)} \nonumber \\  \nonumber \\
& \leq & \left( \frac{e^{-t}}{e^{-T}} \right)^{\max\{ \kmax, \, \delta(\Gamma)\}+\eps(\delta(\Gamma) - \kmin)}. \label{X1}
\end{eqnarray}

Finally, we consider the sets $X_n$.  If $i \in X_n$ for $n \geq 2$, then $\rho(x_i,T) > n-1$ and $(x_i)_T \in H_p$ for some $p$ with $10 e^{-t}> |H_p| \geq e^{-T}$ and, moreover, the ball $B(x_i,e^{-T})$ is contained in the shadow at infinity of the squeezed horoball
\[
2e^{-(n-1)} H_p.
\]
Since $ |H_p|< 10e^{-t} $ we also know that $p \in B(z,10e^{-t})$.  For integer $k \in [\kmin, \kmax]$ let
\[
X_n^k  \ = \ \left\{ i \in  X_n \ : \ k(x_i,T) = k \right\}.
\]
For each set  $X_n^k$ we have
\begin{eqnarray*}
\ps \left( \bigcup_{i \in X_n^k} B(x_i, e^{-T}) \right) &\leq& \ps \left(\bigcup_{\substack{p \in P \cap B(z,10e^{-t}): \\ 10e^{-t}> |H_p| \geq e^{-T}, \\ k(p) = k}}  \Pi(2e^{-(n-1)} H_p) \right) \\ \\
&\leq& \sum_{\substack{p \in P \cap B(z,10e^{-t}): \\ 10 e^{-t}>|H_p| \geq e^{-T}  , \\ k(p) = k}}  \ps (\Pi(2e^{-(n-1)} H_p)) \\ \\
 &\lesssim & e^{-n(2\delta(\Gamma)-k)} \sum_{\substack{p \in P \cap B(z,10e^{-t}): \\ 10 e^{-t}> |H_p| \geq e^{-T}}}    |H_p|^{\delta(\Gamma)} \qquad \text{by Lemma \ref{squeeze}}  \\ \\
 &\lesssim& e^{-n(2\delta(\Gamma)-k)} (T-t+\log 10)  \ps(B(z,e^{-t})) \qquad \text{by Lemma \ref{countinghoroballs}} \\ \\
 &\lesssim& e^{-n(2\delta(\Gamma)-k)} (T-t) e^{-t \delta(\Gamma)} \exp(\rho(z,t)(k(z,t)-\delta(\Gamma)))  \quad \text{by (\ref{global3})} \\ \\
&\lesssim& e^{-n(2\delta(\Gamma)-k)} \eps^{-1} n e^{-t \delta(\Gamma)}  e^{(T-t)\max\{k(z,t)-\delta(\Gamma), \, 0\}}
\end{eqnarray*}
by applying (\ref{rhozt}).  In the last line we also used the estimate $(T-t) \leq \eps^{-1} n$, which holds provided $X_n \neq \emptyset$ (which we may assume since we are trying to bound $|X_n|$ from above).   In the other direction, using the fact that $\{x_i\}_{i \in X_n^k}$ is an $e^{-T}$-packing,
\begin{eqnarray*}
\ps \left( \bigcup_{i \in X_n^k} B(x_i, e^{-T}) \right) &\geq & \sum_{i \in X_n^k }   \ps( B(x_i, e^{-T})) \\ \\
 &\gtrsim  &  |X_n^k|    (e^{-T})^{\delta(\Gamma)} \exp(-n (\delta(\Gamma)-k)).
\end{eqnarray*}
Therefore
\[
|X_n^k| \ \lesssim \ \eps^{-1}n e^{-n \delta(\Gamma)} e^{(T-t) \delta(\Gamma)} e^{(T-t)\max\{k(z,t)-\delta(\Gamma), \, 0\}} \  \leq  \ \eps^{-1}n e^{-n \delta(\Gamma)} \left( \frac{e^{-t}}{e^{-T}} \right)^{\max\{\kmax, \,  \delta(\Gamma)\}}
\]
and it follows that
\begin{equation} \label{Xn}
|X_n|  \ \leq  \  \sum_{k =\kmin}^{\kmax} |X_n^k | \ \lesssim \  \eps^{-1}n e^{-n \delta(\Gamma)} \left( \frac{e^{-t}}{e^{-T}} \right)^{\max\{\kmax, \,  \delta(\Gamma)\}}
\end{equation}
Finally, combining (\ref{X0}), (\ref{X1}), and (\ref{Xn}), we have
\begin{eqnarray*}
|X| & = &  |X_0| \ + \  |X_1|  \ + \  \sum_{n=2}^{\infty} |X_n|  \\ \\
& \lesssim &  \left( \frac{e^{-t}}{e^{-T}} \right)^{\max\{ \kmax, \, \delta(\Gamma)\}+\eps(\delta(\Gamma) - \kmin)}  \ + \  \sum_{n=2}^{\infty} \eps^{-1}n e^{-n \delta(\Gamma)} \left( \frac{e^{-t}}{e^{-T}} \right)^{\max\{\kmax, \,  \delta(\Gamma)\}}  \\ \\
& \lesssim &  \left( \frac{e^{-t}}{e^{-T}} \right)^{\max\{ \kmax, \, \delta(\Gamma)\}+\eps(\delta(\Gamma) - \kmin)}  \ + \  \eps^{-1} \left( \frac{e^{-t}}{e^{-T}} \right)^{\max\{\kmax, \,  \delta(\Gamma)\}}
\end{eqnarray*}
which proves that $\ad \ls \leq  \max\{\kmax, \, \delta(\Gamma)\}+\eps(\delta(\Gamma) - \kmin)$ and letting $\eps \to 0$ provides the desired upper bound.

\subsection{The lower dimension of $\ls$}

\subsubsection{Upper bound: $\ld \ls \leq  \min\{\kmin, \, \delta(\Gamma)\}$}

The upper bound closely follows the lower bound in the Assouad dimension case, although we rely on a deep result of Bowditch \cite{bowditch} which we did not require an analogue of in the Assouad case. Since $\ld \ls \leq \hd \ls = \delta(\Gamma)$ it suffices to prove that $\ld \ls \leq \kmin$.  Switch to the model $\mathbb{H}^{d+1}$ and let $p \in \ls$ be a parabolic fixed point of minimal rank and assume that $p=\infty$, which we may do by conjugation which does not alter any dimensions.  A well-known classification of geometric finiteness is that every point in the limit set must either be a conical limit point or a (bounded) parabolic fixed point (but never both simultaneously).  See \cite{kapovich} for a thorough discussion of this important result which was first proved in dimension 3 by Beardon and Maskit \cite{beardonmaskit}, see also \cite{bishop}, and in higher dimensions it is due to Bowditch \cite[Definition (GF2)]{bowditch}.  In particular, $p=\infty$ is not a conical limit point. Applying the definition of bounded parabolic point to $p=\infty$, see \cite[Definition, page 272]{bowditch}, one obtains the following lemma.   We are grateful to John Parker for bringing this fact to our attention. 
\begin{lma}\label{bowlemma}
There exists $\lambda >0$ and a $\kmin$-dimensional linear  subspace $V \subseteq \mathbb{R}^d$ such that $\ls \subseteq V _\lambda \cup \{\infty\}$ where  $V_\lambda = \{ x \in \mathbb{R}^d \ : \ \inf_{y \in V} \|x-y\| \leq \lambda\}$ denotes the Euclidean $\lambda$-neighbourhood of $V$.
\end{lma}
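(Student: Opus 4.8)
The plan is to make full use of the one hypothesis we have not yet exploited, namely that $p = \infty$ is a \emph{bounded} parabolic fixed point. By the geometric finiteness classification recalled just above, $\infty$ is parabolic and hence not a conical limit point, so Bowditch's condition (GF2) applies: the stabiliser $G = \mathrm{Stab}_\Gamma(\infty)$ acts cocompactly on $\ls \setminus \{\infty\}$. First I would translate this into a concrete covering statement by fixing a compact set $K \subseteq \mathbb{R}^d$, say $K \subseteq B(0,M)$, with
\[
\ls \setminus \{\infty\} \ \subseteq \ G \cdot K \ = \ \bigcup_{g \in G} g(K).
\]
The entire problem then reduces to showing that $\sup_{g \in G}\sup_{x \in K} \mathrm{dist}(g(x),V)$ is finite for a suitable $\kmin$-dimensional subspace $V$.

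Next I would record the structure of $G$. Every isometry of $\mathbb{H}^{d+1}$ fixing $\infty$ acts on $\mathbb{R}^d$ as a similarity $x \mapsto \kappa A x + b$ with $\kappa>0$ and $A \in O(d)$; here $\kappa = 1$ for all $g \in G$, since $G$ contains a parabolic and a group containing both a parabolic and a loxodromic element fixing a common point cannot be discrete (as recalled in the introduction). Thus $G$ acts by Euclidean isometries $g=(A_g,b_g)$. Let $T \trianglelefteq G$ denote the subgroup of pure translations; since $G$ is virtually Abelian (it contains the finite-index free Abelian subgroup generated by parabolics), the Bieberbach theorems give that $T$ is of finite index in $G$, and comparing with the maximal free Abelian subgroup shows $\mathrm{rank}(T) = \kmin$. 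Set $V = \mathrm{span}(T)$, a linear subspace of dimension $\kmin$. The key geometric fact is that $V$ is invariant under every linear part: conjugating the translation by $v$ by $(A_g,b_g)$ yields the translation by $A_g v$, so normality of $T$ forces $A_g V \subseteq V$, and hence $A_g V = V$ and $A_g V^\perp = V^\perp$ for all $g \in G$.

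Finally I would assemble the uniform bound. Writing $G$ as a finite union of cosets $g_1 T, \dots, g_m T$, any $g \in G$ is $g = g_j \circ \tau$ with $\tau$ the translation by some $v \in V$, so its translation part is $b_g = A_{g_j} v + b_{g_j}$; since $A_{g_j} v \in V$, the orthogonal projection satisfies $P_{V^\perp} b_g = P_{V^\perp} b_{g_j}$, which ranges over a finite set. Combining this with $P_{V^\perp} A_g = A_g P_{V^\perp}$ (valid because $A_g$ preserves $V^\perp$), for $x \in K$ I get
\[
\mathrm{dist}\big(g(x),V\big) = \big|P_{V^\perp}(A_g x + b_g)\big| \leq |P_{V^\perp} x| + |P_{V^\perp} b_{g_j}| \leq M + \max_{1 \le j \le m}|P_{V^\perp} b_{g_j}| =: \lambda,
\]
whence $G \cdot K \subseteq V_\lambda$ and the lemma follows. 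I expect the main obstacle to be the second paragraph: extracting the correct compactness consequence from the definition of a bounded parabolic point and pinning down that the pure-translation subgroup has finite index and rank exactly $\kmin$. Once the normality of $T$ — and with it the $A_g$-invariance of $V$ — is established, the coset bookkeeping in the last step is routine, since only the finitely many representatives $g_j$ contribute to the distance from $V$.
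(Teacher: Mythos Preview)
Your overall strategy is exactly what the paper is invoking when it cites Bowditch: the definition of a bounded parabolic point gives a compact $K \subseteq \mathbb{R}^d$ with $\ls\setminus\{\infty\}\subseteq G\cdot K$ for $G=\mathrm{Stab}_\Gamma(\infty)$, and one then uses the structure of $G$ as a discrete group of Euclidean isometries virtually isomorphic to $\mathbb{Z}^{\kmin}$ to confine $G\cdot K$ to a tube around a $\kmin$-plane. So the route is the intended one.

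There is, however, a genuine gap in your second paragraph. The claim that the pure-translation subgroup $T$ has finite index in $G$ is not a consequence of the Bieberbach theorems and is in fact false for $d\geq 3$. Take $g(x)=Rx+e_d$ on $\mathbb{R}^d$ with $R\in SO(d)$ an irrational rotation in a $2$-plane orthogonal to $e_d$; then $g$ is parabolic (a ``screw parabolic''), $G=\langle g\rangle\cong\mathbb{Z}$ is discrete with $k(p)=1$, but $T=\{\mathrm{id}\}$, so $[G:T]=\infty$ and your $V=\mathrm{span}(T)=\{0\}$ has the wrong dimension. Such cusp stabilisers do occur inside non-elementary geometrically finite groups in higher dimensions, so you cannot rule them out. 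Bieberbach's first theorem only applies once $G$ is known to act \emph{cocompactly} on the ambient space, which here is $\mathbb{R}^d$ and generally fails.

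The fix is to bypass $T$ altogether. The correct structural fact (this is what Bowditch actually proves, and is also in Ratcliffe) is that a discrete $G\leq\mathrm{Isom}(\mathbb{R}^d)$ of rank $\kmin$ admits a $G$-invariant affine subspace $E\subseteq\mathbb{R}^d$ of dimension $\kmin$ on which $G$ acts crystallographically. Once you have a $G$-invariant $E$, the argument becomes shorter than your coset computation: since $G$ acts by isometries and $gE=E$, the function $x\mapsto\mathrm{dist}(x,E)$ is $G$-invariant, hence
\[
\sup_{g\in G}\sup_{x\in K}\mathrm{dist}(g(x),E)=\sup_{x\in K}\mathrm{dist}(x,E)<\infty
\]
by compactness of $K$. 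Taking $V$ to be the linear subspace parallel to $E$ and enlarging $\lambda$ by $\mathrm{dist}(0,E)$ finishes the proof. In the screw-parabolic example above, $E$ is the $e_d$-axis and the orbit of any point spirals around it at fixed distance, exactly as this argument predicts.
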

 Interestingly, this lemma relies on geometric finiteness, see \cite{parkerstratmann}, but our proof of the lower bound in the Assouad dimension case is valid for \emph{any} non-elementary Kleinian group.  Let $\infty \neq z \in \ls$ and consider balls $B(z, R)$ with $R$ tending to $\infty$ and choose $r=2\lambda$.  Then, by Lemma \ref{bowlemma}
\[
N_r \left(B(z,R) \cap \ls \right)  \leq N_r \left(B(z,R) \cap V_\lambda \right)  \lesssim  (R/r)^{\kmin}
\]
which proves $\ld \ls \leq  \kmin$, as required.

\subsubsection{Lower bound: $\ld \ls \geq  \min\{\kmin, \, \delta(\Gamma)\}$}

The lower bound is philosophically similar to the upper bound in the Assouad dimension case, although the details turn out to be rather different. Recall that $\ld \ls \geq \lrd \ps = \min\{ \kmin , \, 2 \delta(\Gamma) - \kmax\}$ and therefore if $\delta(\Gamma) \geq (\kmax+\kmin)/2$, then the desired lower bound $\ld \ls \geq \kmin$ follows immediately and therefore we assume from now on that $\delta(\Gamma) < (\kmax+\kmin)/2 \leq \kmax$.

Let $z \in \ls$, $\epsilon \in (0,1)$ and $T>t>1$ with $T-t \geq \min\{\eps^{-1}, \, \log 10\}$.  Let $\{B(y_i, e^{-T})\}_{i \in Y}$ be a centred $e^{-T}$-cover of $B(z,e^{-t}) \cap \ls$ of minimal cardinality.  By `centred' we mean that $y_i \in B(z,e^{-t}) \cap \ls$ for all  $i \in Y$.   Decompose  $Y$ as the union 
\[
Y \ = Y_0 \, \cup \, Y_1 
\]
where
\[
Y_0 = \left\{ i \in Y \ : \ (y_i)_T \in H_p \text{ with } |H_p| \geq 10 e^{-t}   \right\}
\]
and
\[
Y_1 = Y \setminus Y_0.
\]
 Since $\{B(y_i, e^{-T})\}_{i \in Y}$ is a cover of $B(z,e^{-t}) \cap \ls$ we have
\begin{eqnarray} \label{pigeon}
 \ps(B(z,e^{-t})) &\leq&  \ps\left( \cup_{i \in Y} B(y_i,e^{-T})\right) \nonumber \\ \nonumber \\
&  \leq& \ps\left( \cup_{i \in Y_0} B(y_i,e^{-T})\right) \ +  \ \ps\left( \cup_{i \in Y_1} B(y_i,e^{-T})\right)
\end{eqnarray}
and therefore at least one of the two  terms in (\ref{pigeon}) must be at least $ \ps(B(z,e^{-t}))/2$.  We will consider each of these possibilities separately, beginning with the term involving $Y_0$.   In this case  $Y_0 \neq \emptyset$ and therefore, as above, we know that we can fix  $p \in P$ with $|H_p| \geq 10 e^{-t}$ such that $(y_i)_T \in H_p$ for all $i \in Y_0$ and, moreover, that  $z_t \in H_p$. If $\delta(\Gamma) \geq k(p)$, then by (\ref{global3})
\begin{eqnarray*}
 (e^{-t})^{\delta(\Gamma)} \exp(-\rho(z,t)(\delta(\Gamma)-k(p))) & \lesssim & \ps(B(z,e^{-t})) \\ \\
&\leq & 2 \, \ps\left( \cup_{i \in Y_0} B(y_i,e^{-T})\right) \\ \\
&\lesssim & |Y_0|   \max_{i \in Y_0} (e^{-T})^{\delta(\Gamma)} \exp(-\rho(y_i,T) (\delta(\Gamma)-k(p))) 
\end{eqnarray*}
and therefore
\begin{eqnarray}
|Y_0| &\gtrsim &  \left( \frac{e^{-t}}{e^{-T}} \right)^{\delta(\Gamma)} \min_{i \in Y_0} \exp(( \rho(z,t)-\rho(y_i,T)(k(p)-\delta(\Gamma))) \nonumber \\  \nonumber \\
&\gtrsim &  \left( \frac{e^{-t}}{e^{-T}} \right)^{\delta(\Gamma)}  \exp(( T-t)(k(p)-\delta(\Gamma))) \qquad \text{by Lemma \ref{escape2}} \nonumber \\  \nonumber \\
&= &  \left( \frac{e^{-t}}{e^{-T}} \right)^{k(p)} \label{dk1}.
\end{eqnarray}
Now suppose  $\delta(\Gamma) < k(p)$ and write
\[
Y_0^0 = \left\{ i \in Y_0 \ : \ \rho(y_i,T) \leq  \rho(z,t)  \right\}.
\]
If $ i \in Y_0 \setminus Y_0^0$  then $\rho(y_i,T) >  \rho(z,t)$ and therefore $(y_i)_T \in e^{-\rho(z,t)} H_p$.  The Euclidean distance from $(y_i)_T$ to $\mathbb{S}^{d}$ is $ e^{-T}$ and  $e^{-\rho(z,t)} |H_p| \approx e^{-t}$ (recall that this is implied by $Y_0 \setminus Y_0^0 \neq \emptyset$, see (\ref{goodsize})).  Therefore, writing $\eta$ for the Euclidean radius of $ e^{-\rho(z,t)} H_p$,  Pythagoras' Theorem guarantees that
\[
\|y_i - p \| \leq   \sqrt{\eta^2 - (\eta-e^{-T})^2} \lesssim \sqrt{\eta e^{-T}} \lesssim \sqrt{e^{-t}e^{-T}},
\]
see Figure \ref{pic44}.
\begin{figure}[H]
  \centering
  \includegraphics[width= 0.6\textwidth]{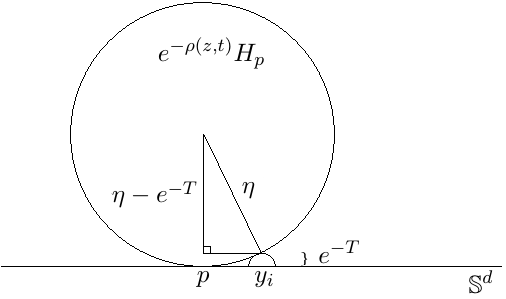}
\caption{A right-angled triangle with vertices at $(y_i)_T$,  $p_T$ and the centre of $ e^{-\rho(z,t)} H_p$.}
\label{pic44}
\end{figure}
\noindent It follows that  $B(y_i, e^{-T})$ is contained in the shadow at infinity of the squeezed horoball
\[
\kappa' \sqrt{\frac{e^{-T}}{e^{-t}}} e^{-\rho(z,t)} H_p
\]
for some $\kappa' \approx 1$.  Therefore, by Lemma \ref{squeeze}, we have
\begin{eqnarray*}
\ps \left( \cup_{i \in Y_0 \setminus Y_0^0} B(y_i, e^{-T})  \right) &\lesssim & \left(\sqrt{\frac{e^{-T}}{e^{-t}}}\right)^{2\delta(\Gamma)-k(p)} \ps \left( \Pi \left(  e^{-\rho(z,t)} H_p\right) \right)\\ \\
&\approx & e^{(t-T)(\delta(\Gamma)-k(p)/2)} \ps(B(z,e^{-t}))
\end{eqnarray*}
since $e^{-\rho(z,t)} |H_p| \approx e^{-t}$.  Since $\delta(\Gamma)-k(p)/2>0$, this proves that for $T-t$ sufficiently large, balls with centres in $Y_0 \setminus Y_0^0$ cannot carry a fixed proportion of the  $\ps(B(z,e^{-t}))$ and so
\[
\ps \left( \cup_{i \in Y_0^0} B(y_i, e^{-T})  \right) \approx \ps\left( \cup_{i \in Y_0} B(y_i,e^{-T})\right) \geq \ps(B(z,e^{-t}))/2.
\]
Therefore
\begin{eqnarray*}
 (e^{-t})^{\delta(\Gamma)} \exp(\rho(z,t)(k(p)-\delta(\Gamma))) & \lesssim & \ps(B(z,e^{-t})) \qquad \text{by (\ref{global3})} \\ \\
& \lesssim  &  \ps\left( \cup_{i \in Y_0^0} B\left(y_i,e^{-T}\right)\right) \\ \\
&\lesssim& |Y_0^0|    (e^{-T})^{\delta(\Gamma)} \exp(\rho(z,t) (k(p)-\delta(\Gamma))) 
\end{eqnarray*}
by (\ref{global3}) and the definition of $Y_0^0$.  This yields
\[
|Y_0 |  \ \geq \ |Y_0^0| \ \gtrsim   \ \left( \frac{e^{-t}}{e^{-T}} \right)^{\delta(\Gamma)}
\]
which, together with (\ref{dk1}), shows that, irrespective of the relationship between $k(p)$ and $\delta(\Gamma)$, we have the estimate
\begin{equation} \label{Y0}
|Y_0| \ \gtrsim  \ \left( \frac{e^{-t}}{e^{-T}} \right)^{\min \{k(p), \,  \delta(\Gamma)\}} \  \geq  \ \left( \frac{e^{-t}}{e^{-T}} \right)^{\min\{\kmin, \,  \delta(\Gamma)\}}.
\end{equation}

It remains to consider the case where the second term in (\ref{pigeon}) carries at least half the mass, that is
\[
\ps\left( \cup_{i \in Y_1} B(y_i,e^{-T})\right) \geq \ps(B(z,e^{-t})/2.
\]
Since this guarantees $Y_1 \neq \emptyset$, we may assume the estimate (\ref{rhozt}).  Write
\[
Y_1^0 = \left\{ i \in Y_1 \ : \ \rho(y_i,T) \leq  \eps(T-t)  \right\}.
\]
If $ i \in Y_1 \setminus Y_1^0$  then $\rho(y_i,T) >  \eps(T-t) $ and therefore $(y_i)_T \in e^{-\eps(T-t)} H_p$ for some $H_p$ with basepoint $p \in B(z, 10 e^{-t})$ satisfying $e^{-T} \leq |H_p| < 10 e^{-t}$.  Since the Euclidean distance from $(y_i)_T$ to $\mathbb{S}^{d}$ is $ e^{-T}$, we can argue as above (see Figure \ref{pic44}) using Pythagoras' Theorem to show that
\[
\|y_i - p \| \lesssim \sqrt{e^{-\eps(T-t)}|H_p|e^{-T}}
\]
and therefore $B(y_i, e^{-T})$ is contained in the shadow at infinity of the squeezed horoball
\[
\kappa'' \sqrt{\frac{e^{-\eps(T-t)-T}}{|H_p|}} H_p
\]
for some $\kappa'' \approx 1$.  Therefore
\begin{eqnarray*}
\ps \left( \cup_{i \in Y_1 \setminus Y_1^0} B\left(y_i, e^{-T}\right)  \right) &\leq& \sum_{\substack{p \in P \cap B(z,10e^{-t}): \\ 10e^{-t}>|H_p| \geq e^{-T} }} \ps \left( \Pi \left( \kappa'' \sqrt{\frac{e^{-\eps(T-t)-T}}{|H_p|}} H_p \right) \right) \\ \\  
& \approx &   \sum_{\substack{p \in P \cap B(z,10e^{-t}): \\ 10e^{-t}>|H_p| \geq e^{-T} }} \left( \sqrt{\frac{e^{-\eps(T-t)-T}}{|H_p|}} \right)^{2\delta(\Gamma) - k(p)} |H_p|^{\delta(\Gamma)}  \\ \\
&\,& \hspace{60mm} \qquad \text{by Lemma \ref{squeeze}} \\ \\  
& \leq &   e^{-\eps(T-t)(\delta(\Gamma)-\kmax/2) }  \hspace{-5mm}\sum_{\substack{p \in P \cap B(z,10e^{-t}): \\ 10 e^{-t}>|H_p| \geq e^{-T} }} \left( \sqrt{\frac{e^{-T}}{|H_p|}} \right)^{2\delta(\Gamma) - k(p)} \hspace{-2mm} |H_p|^{\delta(\Gamma)}  \\ \\ 
& \leq &    e^{-\eps(T-t)(\delta(\Gamma)-\kmax/2) }\sum_{\substack{p \in P \cap B(z,10e^{-t}): \\ 10 e^{-t}>|H_p| \geq e^{-T} }}  |H_p|^{\delta(\Gamma)}  \\ \\   
& \lesssim &     e^{-\eps(T-t)(\delta(\Gamma)-\kmax/2) }(T-t+\log 10) \ps(B(z,e^{-t})) 
\end{eqnarray*}
by Lemma \ref{countinghoroballs}.  Since $\delta(\Gamma)-\kmax/2>0$ this proves that for $T-t$ sufficiently large, balls with centres in $Y_1 \setminus Y_1^0$ cannot carry a fixed proportion of $\ps(B(z,e^{-t}))$ and so
\[
\ps \left( \cup_{i \in Y_1^0} B(y_i, e^{-T})  \right) \approx \ps\left( \cup_{i \in Y_1} B(y_i,e^{-T})\right) \geq \ps(B(z,e^{-t})/2.
\]
 It then follows from (\ref{global3}) that 
\begin{eqnarray*}
(e^{-t})^{\delta(\Gamma)} \exp(-\rho(z,t)(\delta(\Gamma)-k(z,t))) & \lesssim & \ps(B(z,e^{-t}))  \\ \\
& \lesssim & \ps\left( \cup_{i \in Y_1^0} B(y_i,e^{-T})\right) \\ \\
& \lesssim & \sum_{i \in Y_1^0}  (e^{-T})^{\delta(\Gamma)} \exp(-\rho(y_i,T) (\delta(\Gamma)-k(y_i,T)))  \\ \\
& \leq & |Y_1^0|   (e^{-T})^{\delta(\Gamma)} \exp(\eps(T-t) (\kmax-\delta(\Gamma))) 
\end{eqnarray*}
where the last estimate uses the definition of $Y_1^0$ and our assumption that $\delta(\Gamma) \leq \kmax$.  Therefore, applying (\ref{rhozt}),
\begin{eqnarray}
 |Y_1| \  \geq  \ |Y_1^0|  & \gtrsim &   \left( \frac{e^{-t}}{e^{-T}} \right)^{\delta(\Gamma)} e^{\eps(T-t)(\delta(\Gamma)-\kmax) } e^{(T-t) \min\{k(z,t)-\delta(\Gamma), \, 0\}}  \nonumber \\  \nonumber\\
& = & \left( \frac{e^{-t}}{e^{-T}} \right)^{\min\{ k(z,t) , \, \delta(\Gamma)\}+\eps(\delta(\Gamma) - \kmax)} \nonumber \\  \nonumber \\
& \geq & \left( \frac{e^{-t}}{e^{-T}} \right)^{\min\{ \kmin, \, \delta(\Gamma)\}+\eps(\delta(\Gamma) - \kmax)}. \label{Y1}
\end{eqnarray}
We have proved that at least one of (\ref{Y0}) and (\ref{Y1}) must hold and therefore
\begin{eqnarray*}
|Y| \ = \  |Y_0|+|Y_1| \  \gtrsim  \   \left( \frac{e^{-t}}{e^{-T}} \right)^{\min\{ \kmin, \, \delta(\Gamma)\}+\eps(\delta(\Gamma) - \kmax)} 
\end{eqnarray*}
which proves that $\ld \ls \geq  \min\{\kmin, \, \delta(\Gamma)\}-\eps(\kmax - \delta(\Gamma) )$ and letting $\eps \to 0$ provides the desired lower bound.

\section{Local dimensions of $\ps$: proof of Proposition \ref{localdims}} \label{localdimsproof}

Let $z \in \ls$ and $t>0$.  Then combining (\ref{global3}) and the fact that $\rho(z,t) \leq t$ gives
\begin{eqnarray*}
\uld (\ps, z) =  \delta(\Gamma)+  \limsup_{t \to \infty} \frac{\rho(z,t)(\delta(\Gamma)-k(z,t))}{t} & \leq& \delta(\Gamma) + \max\{0, \delta(\Gamma) - \kmin\} \\
& =& \max\left\{\delta(\Gamma), \,  2\delta(\Gamma) - \kmin\right\}
\end{eqnarray*}
and
\begin{eqnarray*}
\lld (\ps, z) =  \delta(\Gamma)+  \liminf_{t \to \infty} \frac{\rho(z,t)(\delta(\Gamma)-k(z,t))}{t} & \geq& \delta(\Gamma) + \min\{0, \delta(\Gamma) - \kmax\} \\
& =& \min\left\{\delta(\Gamma), \,  2\delta(\Gamma) - \kmax\right\}.
\end{eqnarray*}
Moreover, the local dimension $\delta(\Gamma)$ is achieved at $\ps$-typical $z$, see \cite{stratmannvelani}, and if $p$ a parabolic fixed point of rank $k(p)$ associated with a standard horoball $H_p$, then the above estimates combined with Lemma \ref{parapoint} yield
\[
\dim_{\textup{loc}} (\ps, p) \ = \ 2\delta(\Gamma) - k(p)
\]
which completes the proof.

\vspace{1mm}

\begin{samepage}

\subsection*{Acknowledgements}

The  author was   supported by a \emph{Leverhulme Trust Research Fellowship} (RF-2016-500).  Much of the work was completed while the author was resident at the  Institut Mittag-Leffler during the 2017  semester programme \emph{Fractal Geometry and Dynamics} and  is very grateful for the inspiring atmosphere he found there.  He is extremely grateful to John Parker for many helpful conversations about Kleinian groups and also thanks Douglas Howroyd for stimulating discussions regarding the regularity dimensions of measures.

Finally, the author wishes to acknowledge Bernd Stratmann, whose tragic passing in 2015 was a terrible loss to the community.  The author first became interested in hyperbolic geometry through Bernd and is grateful for many interesting conversations over the years.

\end{samepage}

\vspace{10mm}

\begin{samepage}

\noindent \emph{Jonathan M. Fraser\\
School of Mathematics and Statistics\\
The University of St Andrews\\
St Andrews, KY16 9SS, Scotland} \\
\noindent  Email: jmf32@st-andrews.ac.uk\\ \\

\end{samepage}

\end{document}